\documentclass[a4paper,12pt]{article}

\advance\textheight2.9cm        
\advance\topmargin-2.0cm
\advance\textwidth2.6cm
\advance\evensidemargin-1.6cm

\usepackage{amsfonts} 
\usepackage{graphicx}
\usepackage{subcaption}
\usepackage{url}
\usepackage{cite}

\usepackage{theorem}
\usepackage{amsfonts}
\usepackage{amssymb} 
\usepackage{amsmath}
\usepackage{longtable}
\usepackage{array}
\usepackage{bm}  
\def\comment#1{{}}	

\usepackage{theorem}
\newtheorem{thm}{Theorem}
\newtheorem{lem}{Lemma}

\newtheorem{cor}{Corollary}
\newtheorem{prop}{Proposition}
\newtheorem{conj}{Conjecture}
\newtheorem{rem}{Remark}

\usepackage{bm} 
\def\conv{\fct{conv}}
\def\cl{\fct{cl}}

\def\median{\fct{median}}

\def\CG{\texttt{CG}}

\def\MCS{\texttt{MCS}}

\def\Box{{\hbox{\raisebox{0.0em}{\rlap{$\sqcap$}}\kern0em%
            \raisebox{-0.0em}{$\sqcup$}}} } 

\newenvironment{proof}{{\it Proof. }}{\nopagebreak\hspace*{0.5cm}\hfill$\Box$\vspace{0.5cm}}

\def\bepf{\begin{proof}}
\def\epf{\end{proof}}

\def\bfi#1{\textbf{#1}} 

\def\eeq{\end{equation}}
\def\lbeq#1{\begin{equation} \label{#1}}
\def\bary{\begin{array}}
\def\eary{\end{array}}

\def\D{\displaystyle}				
\def\ol{\overline}

\def\wt{\widetilde}

\def\eps{\varepsilon}

\def\Nz{\mathbb{N}}

\def\Rz{\mathbb{R}}

\def\ve {{\bf e}}

\def\bp {{\bf p}}

\def\argmin{\fct{argmin}}
\def\argmax{\fct{argmax}}

\def\dist{\fct{dist}}

\def\fct#1{\mathop{\rm #1}}	                

\def\median{\fct{median}}

\def\opt{\fct{opt}}

\def\D{\displaystyle}				
\def\ol{\overline}

\def\ealg{\end{alg}}

\usepackage{array,multirow,graphicx}
\usepackage[utf8]{inputenc}

\usepackage{graphicx}

\usepackage{colortbl}

\usepackage[table]{xcolor}

\usepackage{fancyhdr}

\usepackage{hyperref}
\usepackage{cleveref}

\usepackage{array,multirow,graphicx}

\usepackage{etex}

\usepackage{longtable}
\usepackage{lscape}
\usepackage{hhline}
\usepackage{caption}
\usepackage{blindtext}
\usepackage{lmodern}
\usepackage{textcomp}
\usepackage{chngcntr}
\usepackage{tikz}
\usepackage{csquotes}
\usepackage[english]{babel}
\usepackage[utf8]{inputenc}
\usepackage{algorithm}
\usepackage{xcolor}
\usepackage{ulem}
\usepackage{cancel}
\usepackage{multicol}
\usepackage{algpseudocode}

\fancypagestyle{plain}{\fancyhf{}\fancyfoot[C]{\bfseries \thepage}}

\newcolumntype{?}{!{\vrule width 1pt}}

\usepackage{adjustbox}

\newcounter{subsubsubsection}[subsubsection]
\def\subsubsubsectionmark#1{}

\def\subsubsubsection{\@startsection
	{subsubsubsection}{4}{\z@} {-3.25ex plus -1
		ex minus -.2ex}{1.5ex plus .2ex}{\normalsize\bf}}
\def\l@subsubsubsection{\@dottedtocline{4}{4.8em}
	{4.2em}}

\newcolumntype{R}[2]{%
	>{\adjustbox{angle=#1,lap=\width-(#2)}\bgroup}%
	l%
	<{\egroup}%
}

\def\bfi#1{{\bf{#1}}}

\usetikzlibrary{matrix,shapes,arrows,positioning,chains}

\def\bfi#1{{\bf{#1}}}



\usepackage{comment}
\usepackage{tikz}
\usetikzlibrary{matrix,shapes,arrows,positioning,chains}

\definecolor{ao(english)}{rgb}{0.0, 0.5, 0.0}

 \numberwithin{equation}{section}
 
 \newcommand{\R}{\mathbb{R}}

 \newcommand{\be}{\beta}

 \newcommand{\bd}{\mathbf{d}}
 \newcommand{\bx}{\mathbf{x}} 
  
 \newcommand{\bz}{\mathbf{z}}

 \newcommand{\bg}{\mathbf{g}}
 \newcommand{\by}{\mathbf{y}}
\newcommand{\bs}{\mathbf{s}}
\newcommand{\bv}{\mathbf{v}}
\newcommand{\bu}{\mathbf{u}}
\newcommand{\bw}{\mathbf{w}}
 \newcommand{\bq}{\mathbf{q}}

\begin{document}

\begin{center}

{\Large \bf An Approximate Conjugate Subgradient Algorithm with Matrix Parameter for Derivative-Free Nonsmooth Optimization Problems}

\vspace{0.5cm}

{\large \bf Morteza Kimiaei}
\centerline{\sl Fakult\"at f\"ur Mathematik, Universit\"at Wien}
\centerline{\sl Oskar--Morgenstern--Platz 1, A--1090 Wien, Austria}
\centerline{\sl email: kimiaeim83@univie.ac.at}
\centerline{\sl WWW: \url{http://www.mat.univie.ac.at/~kimiaei}}

\vspace{0.5cm}

{\large \bf Saman Babaie--Kafaki}
\centerline{\sl Faculty of Engineering, Free University of Bozen--Bolzano}
\centerline{\sl NOI Techpark, Via Bruno Buozzi 1, 39100 Bolzano (BZ), Italy}
\centerline{\sl email: saman.babaiekafaki@unibz.it}

\vspace{0.5cm}

{\large \bf Zohre Aminifard}
\centerline{\sl Université Catholique de Louvain (UCLouvain)}
\centerline{\sl Institute of Information and Communication Technologies, Electronics and} 
\centerline{\sl  Applied Mathematics, Place du Levant 3, B--1348, Louvain--la--Neuve, Belgium}
\centerline{\sl email: zohreh.aminifard@uclouvain.be}

\vspace{0.5cm}

\end{center}

{\bf Abstract.} We propose a derivative-free matrix conjugate-subgradient method for unconstrained nonsmooth optimization of locally Lipschitz functions. The method constructs discrete gradients using only function values and forms a finite sampled model of the Goldstein subdifferential. A minimal-norm element of the convex hull of the sampled discrete gradients is then computed and used both as a stationarity measure and as the reference vector for generating descent-oriented directions. To improve robustness beyond the basic steepest-descent direction, we introduce a matrix memory correction together with coefficient damping, diagonal scaling, bounded-angle correction, and matrix-stability safeguards. A two-point line-search procedure with enrichment is used to obtain either a serious step or an improved local model. Under suitable consistency assumptions on the discrete-gradient approximation and line-search sampling, the method generates directions satisfying a safeguarded descent property and computes approximate Goldstein stationary points. Numerical experiments on nonsmooth test problems with dimensions up to \(1000\)
show that both proposed variants are robust for lower and medium accuracy
requirements, while the matrix conjugate-subgradient variant remains the most
reliable under the strictest tolerance.

{\bf Keywords.} Nonlinear programming, unconstrained nonsmooth optimization, approximate conjugate subgradient algorithm, matrix conjugate subgradient parameter, global convergence. \\

\vspace{0.2cm} {\em 2000 AMS Subject Classification: 90C53, 65K05.}

\hfill \today


\section{Introduction}\label{Introduction}

In the modern age, nonlinear programming plays a central role in various practical aspects of machine learning and data mining \cite{Elden}. Therefore, it deserves to be addressed thoroughly from both algorithmic and modeling perspectives. In recent years, scholars have devoted a substantial portion of their efforts to enriching optimization tools—particularly by emphasizing diversity and inclusion within the appropriate context. While these contributions can be considered satisfactory, the ever-expanding horizons of science and technology call for a broader and more adaptive perspective.

By the beginning of the current century, the growing use of high-dimensional models had led researchers to place increasing emphasis on scalable and computationally efficient algorithmic strategies. This development has also affected how accuracy is treated, since high accuracy requirements often increase the computational burden and may make it difficult to obtain an acceptable solution within a reasonable CPU time. Therefore, a central issue in modeling and optimization is to achieve a meaningful trade-off between accuracy and efficiency, which requires the use of algorithmic strategies specifically designed to control computational cost without sacrificing the quality of the computed solution.

In the context of continuous optimization, \textbf{Conjugate Gradient} (\texttt{CG}) algorithms have traditionally been recognized as a family of memoryless methods designed to address large-scale models \cite{Andreibook}, especially following recent hardware developments. As \textbf{Line Search} (\texttt{LiS}) techniques, they not only directly exploit the steepest descent direction as part of their search direction, but also indirectly incorporate Hessian information—making them often excellent in terms of global convergence and ease of implementation \cite{CGConvDai}. In some cases, \texttt{CG} methods have been endowed with an additional degree of flexibility by incorporating an extra parameter into their formulaic structure. Such extensions are designed to establish connections with other classes of optimization algorithms, such as \textbf{Quasi–Newton} (\texttt{QN}) methods, which more explicitly exploit second-order information of the model. An illustrative example in this context is the \textbf{Dai–Liao} (\texttt{DL}) \cite{DaiLiaoNCG} algorithm, which can nowadays be regarded as a sophisticated memoryless optimization approach \cite{SBKRairo}. Meanwhile, such algorithms have often been devised based on scalar parameters, thereby only benefiting from a so-called one-dimensional flexibility.


\textbf{Derivative-Free Nonsmooth Optimization} (DFNO) in algorithmic point of view can be broadly divided into two categories:
\begin{itemize}
    \item[$-$] An algorithm of the first class attempts to approximate generalized first-order information from function values. Discrete-gradient methods approximate subgradient-like objects through finite-difference sampling and then use these approximations inside \texttt{LiS} frameworks \cite{Bagirov2007,Karmitsa2012}. Simplex-gradient techniques construct local linear models from sampled function values and use the resulting gradient estimates mainly as descent indicators, for example to order polling directions in \textbf{Generalized Pattern Search} (GPS) or \textbf{Mesh Adaptive Direct Search} (MADS) \cite{Custodio2008}. Related approaches include clustering-based approximations of Clarke generalized gradients \cite{Gaudioso2024}, derivative-free variants inspired by gradient-sampling approaches \cite{Kiwiel2010,Hare2013,Hare2013a}, and max-linear trust-region models designed to capture local nonsmooth structure through collections of linear pieces \cite{Liuzzi2019}. These methods can improve practical efficiency, but their reliability depends on the quality, stability, and cost of the approximate first-order information.

    \item[$-$] An algorithm of the second class avoids explicit subgradient approximations and instead obtains convergence from polling geometry, dense direction sets, sufficient decrease, or smoothing mechanisms. MADS is the canonical example: it uses mesh-based polling with asymptotically dense directions to obtain Clarke-type stationarity guarantees without estimating subgradients \cite{audet2006mesh}. The {\tt NOMAD} software package implements MADS and incorporates additional search strategies, including \textbf{Variable Neighborhood Search} (VNS), to improve global exploration while preserving the underlying MADS convergence guarantees \cite{Audet2007}. \texttt{LiS}-based {\tt DFN} methods use dense direction sequences, sufficient-decrease tests, and exact-penalty mechanisms for nonsmooth constrained problems \cite{FasLLR14}. The {\tt DFNDFL} solver, proposed by Giovannelli et al.~\cite{Giovannelli2022}, extends the \texttt{LiS}-based {\tt DFN} framework to mixed-integer nonsmooth constrained optimization by combining dense direction sequences for continuous variables with primitive directions for integer variables and by employing an exact-penalty approach to handle nonlinear constraints. Other relevant derivative-free frameworks include rigorous pattern-search methods, Lipschitzian global optimization methods such as {\tt DIRECT}, and brute-force derivative-free solvers \cite{booker1999rigorous,torczon1997convergence,abramson2006convergence,finkel2004convergence,jones1993lipschitzian,PorT}. 
\end{itemize}
In our approach, the canonical direction set will be enriched by blending deterministic hypercube directions with randomized orthogonal bases \cite{Custodio2008,Audet2007}. The conditioning of the direction system will be monitored, and directions will be regenerated when necessary, improving finite-precision stability and angular coverage for efficient DFNO algorithms.

\subsection{Our Contribution}

This work develops a derivative-free matrix conjugate-subgradient framework for
locally Lipschitz nonsmooth optimization. The main algorithmic idea is to combine
discrete-gradient approximations of Goldstein subdifferentials with a
matrix-based memory mechanism for generating search directions. In contrast to
classical scalar memory corrections, the proposed approach uses a structured
matrix correction to incorporate information from recent iterations while
remaining suitable for function-value-only optimization. This matrix formulation
provides additional flexibility in the construction of search directions and
allows the use of stability safeguards based on matrix quantities, such as
eigenvalue and conditioning information.

The main contributions of this work are as follows:
\begin{itemize}

\item We develop a derivative-free framework for locally Lipschitz nonsmooth
optimization based on discrete gradients and finite sampled approximations of
Goldstein subdifferentials. The method uses only function values and does not
require analytical gradients, subgradients, or active-index information.

\item We compute a minimal-norm element of the convex hull of sampled discrete
gradients and use it as a practical stationarity residual and as the reference
vector for constructing descent-oriented search directions.

\item We introduce a matrix conjugate-subgradient direction that enriches the
basic discrete-gradient steepest-descent direction by incorporating memory from
previous iterations. The direction is protected by coefficient damping,
dominance control, diagonal scaling, matrix-stability checks, and a bounded-angle
safeguard.

\item We design a two-point line-search and enrichment procedure that either
accepts a serious step or augments the local discrete-gradient model. This
mechanism improves the sampled model while preserving a function-value-only
implementation.

\item We establish the consistency of the finite discrete-gradient model with
Goldstein-type stationarity under appropriate approximation assumptions and prove
descent and finite-termination properties for the safeguarded line-search
procedure.

\item We prove that, under the stated consistency assumptions and vanishing
sampling and discretization errors, the proposed framework generates approximate
Goldstein stationary points whose accumulation points are Clarke stationary.

\item We evaluate the proposed methods against established derivative-free
nonsmooth solvers on test problems with dimensions up to \(1000\). The numerical
results show that both proposed variants are robust for lower and medium
accuracy requirements, while the matrix conjugate-subgradient variant remains
the most reliable under the strictest tolerance.

\end{itemize}

In summary, the contribution has two components. First, we adapt the
exact-subgradient sampling and enrichment framework proposed in \cite{Maleknia2024} to the derivative-free
case by replacing exact subgradients with discrete-gradient approximations
computed from function values. This requires rebuilding the sampled model,
the minimal-norm stationarity residual, and the enrichment mechanism in terms
of approximate subgradient information. Second, we introduce new algorithmic
components that are not present in the exact-subgradient framework: a
derivative-free matrix conjugate-subgradient direction, a componentwise
diagonal scaling based on previously found best points, and safeguards based
on damping, dominance control, matrix stability, and bounded-angle correction.
The line search is also modified for robustness by using a two-point acceptance
test rather than relying only on the standard geometric update. These changes
allow the method to exploit memory information while remaining fully
derivative-free and stable under approximate subgradient models.

\subsection{Organization of the Study}

The organization of our study is summarized as follows. 
Section~\ref{sec:preliminaries} introduces the problem setting, stationarity
concepts, discrete-gradient approximations, and the finite sampled
subdifferential model. Section~\ref{MainCSG} presents the proposed matrix
conjugate-subgradient direction and the associated safeguard mechanisms.
Section~\ref{subsecTPLS} develops the two-point line-search and enrichment
procedure. Section~\ref{sec_stationarity_algorithm} gives the complete
algorithm and its convergence properties. Section~\ref{sec:numerical_results}
reports the numerical experiments and compares the proposed variants with
benchmark DFON solvers. Finally,
Section~\ref{sec:conclusion} concludes the paper.

\section{Preliminaries}
\label{sec:preliminaries}

This section collects the basic concepts and notation used in the development of
the proposed method. We first state the nonsmooth unconstrained optimization
problem, then recall Clarke and Goldstein stationarity, and finally introduce
the discrete-gradient constructions and finite sampled models used to obtain
computable derivative-free stationarity measures.

\subsection{Problem Formulation}
\label{subsec:problem_setting}

We consider the unconstrained minimization problem 
\begin{equation}\label{UP}
 \min_{\bx\in \mathbb{R}^n}\ f(\bx),
\end{equation}
where $f:\mathbb{R}^n \to \mathbb{R}$ is assumed to be locally Lipschitz, possibly nonconvex and nonsmooth, contributing significantly to practical implementations in machine learning and data mining \cite{Elden}. In particular, a wide range of constrained optimization algorithms need to locally address the unconstrained transformation of the original model within the framework of a subproblem, which highlights the importance of model \eqref{UP}.  In many modern applications, the objective function $f$ in \eqref{UP} is locally Lipschitz but may fail to be continuously differentiable. In such cases, classical gradient-based algorithms cannot be applied directly, since gradients may be unavailable or not exist everywhere. 

\subsection{Stationarity Concepts and Practical Criterion}
\label{subsec:stationarity}

In DFNO, it is well-known that the use of
approximate gradients, typically obtained via finite differences, 
can significantly improve performance, provided that the noise level
is sufficiently controlled. A similar principle applies in the
nonsmooth setting: approximations of subgradients constructed via
finite-difference schemes, such as discrete gradients
\cite{Bagirov2007,Karmitsa2012}, can lead to more effective descent
directions than purely directional sampling strategies. The following definitions of Clarke stationarity and $\varepsilon$-Clarke stationarity are standard in nonsmooth optimization and are based on the Clarke generalized directional derivative and Clarke subdifferential; see, e.g., \cite{audet2006mesh} and \cite{FasLLR14}.

\paragraph{Clarke stationarity.}
A point $\bx \in \R^n$ is called a \bfi{Clarke stationary point} of a locally Lipschitz function $f$ if
\[
f^\circ(\bx;\bd)
:=
\limsup_{\by\to \bx,\ t\downarrow 0}
\frac{f(\by+t\bd)-f(\by)}{t}
\ge 0
\quad \forall \bd \in \R^n,
\]
or equivalently, $0 \in \partial f(\bx)$.

Let $D_f \subset \R^n$ denote the set of differentiability points of $f$. Since $f$ is locally Lipschitz, $D_f$ has full measure. Throughout the paper, we denote by \(\Nz_0:=\{0,1,2,\ldots\}\) the set of nonnegative integers.

\paragraph{$\varepsilon$-Clarke stationary point.} For a given tolerance $\varepsilon>0$, if
\begin{equation}\label{eq:clarke}
\min_{\bv \in \partial f(\bx)} \|\bv\| \le \varepsilon,
\end{equation}
then we say that $\bx \in \R^n$ is an
\bfi{$\varepsilon$-Clarke stationary point}, where the Clarke subdifferential is given by
\begin{equation}\label{eq:clarkesub}
\partial f(\bx)
:=
\conv\Big\{
\bg \in \R^n \ \big|\ \exists \{\bx_k\}_{k\in\Nz_0} \subset D_f,\ 
\bx_k \to \bx,\ 
\nabla f(\bx_k) \to \bg
\Big\}.
\end{equation}

\paragraph{Computability issue.}
Condition \eqref{eq:clarke} is generally impractical to verify in derivative-free settings, since it requires full knowledge of $\partial f(\bx)$. Therefore, we rely on computable approximations of the Goldstein subdifferential using discrete gradients, as will be described in Section~\ref{subsec:goldstein_stationarity}.

\subsection{Goldstein Subdifferential and Approximate Stationarity}
\label{subsec:goldstein_stationarity}

In practice, verifying \eqref{eq:clarke} is challenging, since it
requires access to the entire set $\partial f(\bx)$, which is generally
unavailable in derivative-free settings. For this reason, it is more
practical to work with the \bfi{Goldstein $\varepsilon$-subdifferential}.
Here and throughout, $\cl A$ denotes the closure of a set $A$. We define
\begin{equation}\label{e.Goldsteinsubb}
\partial_\varepsilon f(\bx)
:=
\cl\conv\Big(
\bigcup_{\|\by-\bx\|\le \varepsilon} \partial f(\by)
\Big),
\end{equation}
where $\partial f(\by)$ denotes the Clarke subdifferential of $f$ at $\by$, defined by \eqref{eq:clarkesub}.  Unlike $\partial f(\bx)$, the set $\partial_\varepsilon f(\bx)$ captures
information from a neighborhood of $\bx$, and is therefore more amenable
to approximation via sampling and finite-difference constructions. For locally Lipschitz functions, $\partial_\varepsilon f(\bx)$ is a nonempty, convex, and compact subset of $\R^n$. 
Indeed, since $f$ is locally Lipschitz, the Clarke subdifferential $\partial f(\by)$ is nonempty and bounded on bounded sets. Therefore, the union $\bigcup_{\|\by-\bx\|\le \varepsilon} \partial f(\by)$ is bounded. In finite-dimensional spaces, the closed convex hull of a bounded set is compact, which implies that $\partial_\varepsilon f(\bx)$ is compact.

Recall that $\bx \in \mathbb{R}^n$ is a Clarke stationary point if $0 \in \partial f(\bx)$. Moreover, if $\bx$ is a local minimizer of $f$, then it is necessarily a Clarke stationary point. Since the function $f$ is locally Lipschitz but possibly nonsmooth,
the exact Clarke subdifferential $\partial f(\bx)$ is generally unavailable.
Therefore, we construct computable approximations of the Clarke and
Goldstein $\varepsilon$-subdifferentials using discrete-gradient constructions,
which are accurate up to a controlled approximation error.

\paragraph{Approximate stationarity point.}
Given $\delta>0$ and $\varepsilon>0$, let
$\conv\mathcal{G}_\varepsilon(\bx)$ be a finite computable model of
$\partial_\varepsilon f(\bx)$. We say that $\bx\in\R^n$ is a
$(\delta,\mathcal{G}_\varepsilon(\bx))$ \bfi{Goldstein stationary point} if
\[
\min\{\|\bv\|\mid \bv\in\conv\mathcal{G}_\varepsilon(\bx)\}\le \delta.
\]
This definition is understood as a computable surrogate of Goldstein
stationarity based on the finite model
$\conv\mathcal{G}_\varepsilon(\bx)$; cf. \cite{Maleknia2024}.

\paragraph{Relation to discrete-gradient approximations.}
The set $\mathcal{G}_\varepsilon(\bx)$ represents an ideal finite model of
the Goldstein subdifferential $\partial_\varepsilon f(\bx)$ constructed
from exact subgradient information. In derivative-free settings, such
subgradients are generally unavailable. We therefore replace this ideal
model by a finite set $\mathcal V(\bx_k)$ constructed via discrete
gradients.

The resulting set $\conv(\mathcal V(\bx_k))$ should not be interpreted as
an exact outer approximation of $\partial_{\varepsilon_k}f(\bx_k)$.
Rather, under the discrete-gradient consistency assumptions stated below,
its elements are close to elements of the Goldstein subdifferential:
\begin{equation}\label{eq:outer_dg}
\conv(\mathcal V(\bx_k))
\subset
\partial_{\varepsilon_k} f(\bx_k)+B(0,\delta_k),
\end{equation}
where $\delta_k\to0$ as the discretization parameter vanishes. Thus,
$\conv(\mathcal V(\bx_k))$ is a computable sampled model of
$\partial_{\varepsilon_k}f(\bx_k)$, accurate up to the approximation
error $\delta_k$.

\paragraph{Approximate stationarity via discrete gradients.}
Let $\mathcal V(\bx_k)$ be the finite set of discrete gradients constructed
at $\bx_k$, and assume that \eqref{eq:outer_dg} holds. We say that
$\bx_k \in \R^n$ is a
$(\delta,\mathcal V(\bx_k))$ \bfi{discrete-gradient Goldstein stationary point} if
\begin{equation}\label{eq:goldstein_stationarity_dg}
\min\{\|\bv\|\mid \bv\in \conv(\mathcal V(\bx_k))\}\le \delta.
\end{equation}
This is a computable stationarity test based on the discrete-gradient
model. Its relation to true Goldstein stationarity depends on the
approximation error in \eqref{eq:outer_dg}.

\begin{prop}[Consistency with Goldstein stationarity]
\label{prop:DG_to_Goldstein}
Let $f:\R^n \to \R$ be locally Lipschitz and fix $\varepsilon>0$.
Assume that the finite set $\mathcal V(\bx_k)$ of discrete gradients satisfies
\eqref{eq:outer_dg} for some $\delta_k \ge 0$. Let
$\mathcal G_\varepsilon(\bx_k)$ be a nonempty finite model of
$\partial_\varepsilon f(\bx_k)$ satisfying the approximate covering condition
\begin{equation}\label{eq:approx_g}
\partial_\varepsilon f(\bx_k)
\subset
\conv(\mathcal G_\varepsilon(\bx_k)) + B(0,\eta_k),
\end{equation}
for some $\eta_k \ge 0$. If $\bx_k$ is a
$(\delta,\mathcal V(\bx_k))$ discrete-gradient Goldstein stationary point, i.e.,
\eqref{eq:goldstein_stationarity_dg} holds, then $\bx_k$ is a
$(\delta+\delta_k+\eta_k,\mathcal G_\varepsilon(\bx_k))$ Goldstein
stationary point, namely,
\[
\min_{\bu \in \conv(\mathcal G_\varepsilon(\bx_k))} \|\bu\|
\le
\delta + \delta_k + \eta_k.
\]
\end{prop}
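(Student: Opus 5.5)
The plan is a direct chain of two approximation steps joined by the triangle inequality. First I fix the minimal-norm element of the discrete-gradient model. Since $\mathcal V(\bx_k)$ is finite, $\conv(\mathcal V(\bx_k))$ is a nonempty compact convex set, so the minimum in \eqref{eq:goldstein_stationarity_dg} is attained at some $\bv^\ast\in\conv(\mathcal V(\bx_k))$ with $\|\bv^\ast\|\le\delta$. Likewise $\conv(\mathcal G_\varepsilon(\bx_k))$ is nonempty and compact, so the minimum on the left of the claimed inequality exists. It therefore suffices to exhibit one $\bu\in\conv(\mathcal G_\varepsilon(\bx_k))$ with $\|\bu\|\le\delta+\delta_k+\eta_k$.

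Second, I transfer $\bv^\ast$ to the true Goldstein subdifferential. By the inclusion \eqref{eq:outer_dg}, $\bv^\ast=\bw+\bz$ with $\bw\in\partial_{\varepsilon_k}f(\bx_k)$ and $\|\bz\|\le\delta_k$. Since $B(0,\delta_k)$ is closed, this holds as written, and in any case an arbitrarily small slack could be absorbed by a limiting argument. This gives $\|\bw\|\le\|\bv^\ast\|+\|\bv^\ast-\bw\|\le\delta+\delta_k$.

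Third, I transfer $\bw$ into the finite model. By the covering condition \eqref{eq:approx_g}, there is $\bu\in\conv(\mathcal G_\varepsilon(\bx_k))$ with $\|\bw-\bu\|\le\eta_k$. Hence
\[
\min_{\bu'\in\conv(\mathcal G_\varepsilon(\bx_k))}\|\bu'\|\le\|\bu\|\le\|\bw\|+\eta_k\le\delta+\delta_k+\eta_k ,
\]
which is the claim.

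The only step that is not routine bookkeeping is the mismatch of radii. Inclusion \eqref{eq:outer_dg} is stated with the radius $\varepsilon_k$, while \eqref{eq:approx_g} uses the fixed $\varepsilon$. The proposition's hypothesis should be read with $\varepsilon_k=\varepsilon$, or more generally $\varepsilon_k\le\varepsilon$. In the general case I will use the monotonicity $\partial_{\varepsilon_k}f(\bx_k)\subset\partial_{\varepsilon}f(\bx_k)$. This follows directly from definition \eqref{e.Goldsteinsubb}, because the union over the ball of radius $\varepsilon_k$ is contained in the union over the larger ball, and closed convex hulls preserve inclusion. With this, $\bw\in\partial_\varepsilon f(\bx_k)$, so \eqref{eq:approx_g} applies to $\bw$ and the chain above goes through.
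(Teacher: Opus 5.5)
Your proof is correct and follows the paper's argument: take a point of $\conv(\mathcal V(\bx_k))$ with norm at most $\delta$, move it within $\delta_k$ into the Goldstein subdifferential via \eqref{eq:outer_dg}, then within $\eta_k$ into $\conv(\mathcal G_\varepsilon(\bx_k))$ via \eqref{eq:approx_g}, and finish with the triangle inequality. You also handle the radius mismatch explicitly ($\varepsilon_k$ versus $\varepsilon$, resolved by monotonicity of the Goldstein subdifferential in the radius when $\varepsilon_k\le\varepsilon$), which the paper passes over by taking $\bw_k\in\partial_\varepsilon f(\bx_k)$ directly.
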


\begin{proof}
By \eqref{eq:outer_dg}, \eqref{eq:goldstein_stationarity_dg}, and \eqref{eq:approx_g}, there are $\bv_k \in \conv(\mathcal V(\bx_k))$ with $\|\bv_k\| \le \delta$, $\bw_k \in \partial_\varepsilon f(\bx_k)$ with $\|\bv_k-\bw_k\| \le \delta_k$, and 
$\bu_k \in \conv(\mathcal G_\varepsilon(\bx_k))$ with $\|\bw_k-\bu_k\| \le \eta_k$, respectively. Therefore, by the triangle inequality,
\[
\|\bu_k\|
\le
\|\bv_k\|+\|\bv_k-\bw_k\|+\|\bw_k-\bu_k\|
\le
\delta+\delta_k+\eta_k.
\]
Since $\bu_k \in \conv(\mathcal G_\varepsilon(\bx_k))$, it follows that $\D\min_{\bu \in \conv(\mathcal G_\varepsilon(\bx_k))}\|\bu\|
\le
\|\bu_k\|
\le
\delta+\delta_k+\eta_k$. This proves the claim.
\end{proof}

\begin{rem}
The discrete-gradient approximation property used in
\eqref{eq:outer_dg} and in Proposition~\ref{prop:Vk_consistency} is
essential. It is not implied by local Lipschitz continuity alone. In the
discrete-gradient literature, such approximation results are obtained
under additional regularity conditions \cite{Bagirov2007,Karmitsa2012}, for example semismoothness together
with a uniform small-o condition on the discrete-gradient construction.
\end{rem}
\begin{rem}[Asymptotic consistency]
\label{rem:asymptotic_consistency}
Assume that $\conv(\mathcal V(\bx_k))
\subset
\partial_\varepsilon f(\bx_k)+B(0,\rho_k)$ as $\rho_k\to0$, and $\partial_\varepsilon f(\bx_k)
\subset
\conv(\mathcal G_\varepsilon(\bx_k))+B(0,\eta_k)$ as $\eta_k\to0$. If $\bx_k$ is a sequence of
$(\tau_k,\mathcal V(\bx_k))$ discrete-gradient Goldstein stationary points,
with $\tau_k\to0$, then
\[
\min_{\bu\in\conv(\mathcal G_\varepsilon(\bx_k))}\|\bu\|
\le
\tau_k+\rho_k+\eta_k
\to0.
\]
Moreover, $\dist\bigl(0,\partial_\varepsilon f(\bx_k)\bigr)
\le
\tau_k+\rho_k
\to0$.
\end{rem}

\subsection{Discrete-gradient Construction}
\label{subsec:discrete_gradient}

In this subsection, we describe the construction of the discrete gradient following the approach introduced in \cite{Bagirov2007,Karmitsa2012}. We denote the set of unit directions by
\[
D := \{ \bd=(d_1,\dots,d_n) \in \R^n \mid \|\bd\| = 1 \}.
\]
For a given direction $\bd \in D$, we define the index
\begin{equation}\label{e.imax}
i_{\max} := \argmax_{j=1,\dots,n} |d_j|.
\end{equation}
Since $\|\bd\|=1$, it follows that $|d_{i_{\max}}| \ge 1/\sqrt{n}$ and hence $d_{i_{\max}} \neq 0$, which guarantees that the construction below is well-defined. The set of vertices of the unit hypercube
\[
G := \{ \ve \in \R^n \mid \ve = (e_1,\dots,e_n)^\top,\ |e_j| = 1,\ j=1,2,...,n \},
\]
and the class of positive infinitesimal functions
\[
P := \left\{ z:(0,\infty)\to(0,\infty) \ \middle|\ \frac{z(\lambda)}{\lambda} \to 0 \text{ as } \lambda \to 0^+ \right\}
\]
are defined. Given parameters $\lambda>0$, $\omega\in(0,1]$, $\ve\in G$, and $z\in P$, the sequence of points
\begin{equation}\label{eq:xgrad_alg}
\bx_0 := \bx + \lambda \bd,
\qquad
\bx_j := \bx_0 + z(\lambda) \ve_j(\omega),
\qquad j=1,\dots,n,
\end{equation}
is defined, where
\[
\ve_1(\omega) :=
\begin{pmatrix}
\omega e_1 \\ 0 \\ \vdots \\ 0
\end{pmatrix},
\quad
\ve_2(\omega):=
\begin{pmatrix}
\omega e_1 \\ \omega^2 e_2 \\ 0 \\ \vdots \\ 0
\end{pmatrix},
\quad
\ldots,
\quad
\ve_n(\omega):=
\begin{pmatrix}
\omega e_1 \\ \omega^2 e_2 \\ \vdots \\ \omega^n e_n
\end{pmatrix}.
\]

In practical implementations, a specific choice of $z(\lambda)$ must be made. Following the implementation commonly used in \cite{Bagirov2007,Karmitsa2012}, we select the quadratic perturbation
\begin{equation}\label{eq:zlambda}
z(\lambda) = \lambda^2,
\end{equation}
which satisfies $z(\lambda)/\lambda \to 0$ as $\lambda\to0$ and yields a stable numerical approximation.

The discrete gradient associated with index $i_{\max}$ satisfying \eqref{e.imax} is defined as
\[
\bg_{i_{\max}}(\bx,\bd,\ve,z,\lambda,\omega)
:=
(g_{i_{\max}1},\dots,g_{i_{\max}n})^\top,
\]
with components
\[
g_{i_{\max}j} :=
\frac{f(\bx_j)-f(\bx_{j-1})}
     {z(\lambda)\omega^j e_j},
\qquad j\neq i_{\max},
\]
and
\[
g_{i_{\max}i_{\max}} :=
\frac{
f(\bx+\lambda \bd) - f(\bx)
- \lambda \displaystyle\sum_{j\neq i_{\max}} g_{i_{\max}j} d_j
}
{\lambda d_{i_{\max}}}.
\]

Since $z(\lambda)>0$, $\omega>0$, and $e_j \in \{\pm1\}$, $j=1,2,...,n$, all denominators are nonzero and the above expressions are well-defined. By construction, the identity
\[
f(\bx+\lambda \bd) - f(\bx)
=
\lambda \bd^\top \bg_{i_{\max}}(\bx,\bd,\ve,z,\lambda,\omega)
\]
holds exactly. Each discrete gradient requires at most $n+2$ function evaluations (or $n+1$ additional evaluations if $f(\bx)$ is already available).

\subsection{Discrete-Gradient Sets and Finite Subdifferential Approximation}
\label{subsec:dg_sets}

\paragraph{Discrete gradient set.}

For a fixed scalar $\lambda>0$, we define
\[
V(\bx,\lambda)
:=
\cl\conv
\left\{
\bg_{i_{\max}}(\bx,\bd,\ve,z,\lambda,\omega)
\mid
\bd\in D,\, \ve\in G,\, z\in P,\, \omega\in(0,1]
\right\}.
\]
The set $V(\bx,\lambda)$ coincides with the discrete-gradient
set $D_0(\bx,\lambda)$ introduced in \cite{Bagirov2007},
up to notation. By Proposition~5.1 in \cite{Bagirov2007}, if $f$ is Lipschitz continuous with constant $L$, then
\begin{equation}\label{eq:dg_bounded}
\|\bg_{i_{\max}}(\bx,\bd,\ve,z,\lambda,\omega)\|
\le C(n)L,
\end{equation}
with the dimension-dependent constant $C(n)>0$, and consequently,
\begin{equation}\label{eq:V_bounded}
V(\bx,\lambda) \subset B(0,C(n)L).
\end{equation}

\paragraph{Approximation of the Clarke subdifferential.}
By Corollary~5.1 in \cite{Bagirov2007}, under the additional regularity
conditions used there, for each fixed $\bx$ and every sufficiently small scalar
$\lambda>0$,
\begin{equation}\label{eq:dg_clarke}
V(\bx,\lambda)
\subset
\partial f(\bx) + B(0,\delta(\lambda)),
\qquad \delta(\lambda)\to 0 \text{ as } \lambda\to 0.
\end{equation}
This is a pointwise approximation result. For convergence arguments
involving points in a neighborhood of $\bx$, we impose Assumption~(A2),
which is the neighborhood version used in \cite[Assumption~5.1]{Bagirov2007}. Moreover, if $\bx^*$ is a local minimizer, then by Proposition~5.3 in \cite{Bagirov2007}, there exists $\lambda_0>0$ such that
\begin{equation}\label{eq:dg_optimality}
0 \in V(\bx^*,\lambda)
\qquad \text{for all } \lambda \in (0,\lambda_0).
\end{equation}

\paragraph{Approximation of the Goldstein subdifferential.}

For $\varepsilon>0$, we define
\[
V_\varepsilon(\bx,\lambda)
:=
\conv
\left(
\bigcup_{\|\by-\bx\|\le \varepsilon} 
V(\by,\lambda)
\right).
\]
If the pointwise approximation \eqref{eq:dg_clarke} holds uniformly for
all $\by$ satisfying $\|\by-\bx\|\le\varepsilon$, then combining it with
the definition of $\partial_\varepsilon f(\bx)$ yields
\begin{equation}\label{eq:dg_goldstein}
V_\varepsilon(\bx,\lambda)
\subset
\partial_\varepsilon f(\bx) + B(0,\delta(\lambda)).
\end{equation}
Under the corresponding uniform version of the pointwise approximation
\eqref{eq:dg_clarke} over the ball
$\{\by\mid \|\by-\bx\|\le \varepsilon\}$, the ideal neighborhood-based
discrete-gradient set satisfies
\begin{equation}\label{eq:dg_ideal}
V_\varepsilon(\bx,\lambda)
\subset
\partial_\varepsilon f(\bx)+B(0,\delta(\lambda)),
\qquad
\delta(\lambda)\to0 .
\end{equation}
Thus, as $\lambda\to0$, the ideal discrete-gradient model is
asymptotically consistent with the Goldstein subdifferential.

\paragraph{Standing Assumptions.}
We collect here all assumptions required for the convergence analysis.

\begin{itemize}
\item[(A1)] The function $f:\Rz^n \to \R$ is locally Lipschitz.

\item[(A2)] For every $\bx\in \Rz^n$ and every $\delta>0$, there exist
$\lambda_0>0$ and $\eps>0$ such that
\[
V(\by,\lambda)
\subseteq
\partial f(\bx+\ol B(0,\delta))+B(0,\delta)
\qquad
\forall \by\in B(\bx,\eps),\quad
\forall \lambda\in(0,\lambda_0),
\]
where
\[
\partial f(\bx+\ol B(0,\delta))
:=
\bigcup_{\bz\in\ol B_\delta(\bx)}\partial f(\bz),
\qquad
\ol B(\bx,\delta)
:=
\{\bz\in \Rz^n\mid \|\bz-\bx\|\le \delta\}.
\]

\item[(A3)] The initial level set
\[
\mathcal L(\bx_0)
:=
\{\bx\in\Rz^n \mid f(\bx)\le f(\bx_0)\}
\]
is bounded.

\item[(A4)] The function $f$ is weakly upper semismooth in the relevant
search directions. Moreover, the discrete gradients generated along any \texttt{LiS} direction satisfy the following directional consistency
condition: for any direction $\bd\in\Rz^n\setminus\{0\}$, any
$z\in\Rz^n$, any infinite index set $J\subset\mathbb N$, any sequence
$\{h_i\}_{i\in J}\subset\R_+$ with $h_i\downarrow0$ as $i\to\infty$
along $J$, and any sequence of discrete gradients $\{\bg_i\}_{i\in J}$
generated at the points $z+h_i\bd$, one has
\begin{equation}\label{ass:wus_dg_lis}
\limsup_{i\in J,\ i\to\infty}\bg_i^\top\bd
\ge
\liminf_{i\in J,\ i\to\infty}
\frac{f(z+h_i\bd)-f(z)}{h_i}.
\end{equation}

\item[(A5)] At every iteration where the local Armijo admissibility result
is invoked, the current model vector
$\bv_k\in\Rz^n\setminus\{0\}$ and the corrected direction
$\bd_k\in\Rz^n\setminus\{0\}$ satisfy the directional model-error bound
\begin{equation}\label{eq:dir_error_bound}
\left|
f^\circ(\bx_k;\bd_k)
-
\bv_k^\top\bd_k
\right|
\le
\xi_k\|\bd_k\|,
\end{equation}
where $\xi_k\ge0$. Moreover, there exists $\theta\in(0,1)$ such that the relative directional model-error condition
\begin{equation}\label{eq:dir_error_small}
\xi_k
\le
\theta\varrho\|\bv_k\|
\end{equation}
holds. This assumption is used only to guarantee local Armijo admissibility of
the corrected direction. It is not required for the finite termination of
Algorithm~\ref{AlgTPLSDG}, which is proved under (A4).

\end{itemize}
Assumption (A2) is the discrete-gradient approximation assumption used in
\cite[Assumption~5.1]{Bagirov2007}. It is not a consequence of local
Lipschitz continuity alone; it is imposed to ensure that discrete gradients
computed at nearby points and sufficiently small discretization parameters
are contained in a small enlargement of the Clarke subdifferential over a
neighborhood of $\bx$.

\begin{rem}[Clarke subdifferential properties used below]
\label{rem:clarke_properties}
Under (A1), the Clarke subdifferential $\partial f(\bx)$ is nonempty,
convex, and compact for every $\bx\in\Rz^n$. Moreover, the set-valued
mapping $\bx\mapsto\partial f(\bx)$ is outer semicontinuous and locally
bounded. In particular, if $\bx_j\to\bx$, $\bg_j\in\partial f(\bx_j)$, and
$\bg_j\to\bg$, then $\bg\in\partial f(\bx)$. Also, for every bounded set $C\subset\Rz^n$, there exists $M>0$ such that
\[
\|\bg\|\le M,
\qquad
\forall \bg\in\partial f(\bx),\quad \forall \bx\in C.
\]
These standard facts will be used in the Clarke-stationarity convergence
argument; see, e.g., \cite{Clarke1990}.
\end{rem}
\paragraph{Finite approximation via sampling.}

In practice, the full sets $V(\bx,\lambda)$ and $V_\varepsilon(\bx,\lambda)$ are unavailable. We construct a finite approximation by selecting directions $\{\bd_1,\dots,\bd_m\}\subset D$ and sampling points $\by_\ell$ such that $\|\by_\ell-\bx_k\|\le \varepsilon_k$:
\begin{equation}\label{eq:defmatVk}
\mathcal{V}(\bx_k)
:=
\left\{
\bg_{i_{\max}}\left(\by_\ell,\bd_\ell,\ve_\ell,z_k,\lambda_k,\omega\right)
\right\}_{\ell=1}^m.
\end{equation}
Then,
\begin{equation}\label{eq:Vk_inclusion}
\conv(\mathcal{V}(\bx_k))
\subset
V_{\varepsilon_k}(\bx_k,\lambda_k).
\end{equation}
Indeed, $\conv(\mathcal V(\bx_k))$ provides a computable finite sampled
model whose elements are controlled through the discrete-gradient
approximation property. It should not be interpreted as an outer
approximation of $\partial_{\varepsilon_k} f(\bx_k)$ in the set-inclusion
sense. Combining \eqref{eq:dg_goldstein} and \eqref{eq:Vk_inclusion}
yields the following finite-sample consistency result.

\begin{prop}[Finite-sample consistency of discrete gradients] \label{prop:Vk_consistency}
Assume that the pointwise discrete-gradient approximation property
\eqref{eq:dg_clarke} holds at each sampling point $\by_\ell$.
Let $\{\lambda_k\}_{k\in\Nz_0}$ and $\{\varepsilon_k\}_{k\in\Nz_0}$ satisfy $\lambda_k \to 0$ and $\varepsilon_k \to 0$. Let $\mathcal{V}(\bx_k)$ be a finite set of discrete gradients constructed
at sampling points $\{\by_\ell\}_{\ell=1}^{m_k}$ satisfying
\[
\|\by_\ell-\bx_k\|\le \varepsilon_k,
\qquad \ell=1,\dots,m_k,
\]
using discretization parameter $\lambda_k$. Suppose that the corresponding
pointwise errors are bounded by $\delta_k\to0$, i.e.,
\[
V(\by_\ell,\lambda_k)
\subset
\partial f(\by_\ell)+B(0,\delta_k),
\qquad \ell=1,\dots,m_k.
\]
Then
\begin{equation}\label{eq:Vk_goldstein}
\conv(\mathcal{V}(\bx_k))
\subset
\partial_{\varepsilon_k} f(\bx_k) + B(0,\delta_k).
\end{equation}
\end{prop}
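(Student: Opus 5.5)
The plan is to prove the inclusion \eqref{eq:Vk_goldstein} elementwise. We use the convexity of the right-hand side together with the definition \eqref{e.Goldsteinsubb} of the Goldstein subdifferential. We do not route through $V_{\varepsilon_k}(\bx_k,\lambda_k)$ and \eqref{eq:dg_goldstein}, since that would require the uniform version of \eqref{eq:dg_clarke}. The hypothesis as stated only gives pointwise bounds at the finitely many sampling points $\by_\ell$, so we argue directly from those.

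\emph{Step 1 (generators).} Fix $\ell\in\{1,\dots,m_k\}$ and let $\bg_\ell:=\bg_{i_{\max}}(\by_\ell,\bd_\ell,\ve_\ell,z_k,\lambda_k,\omega)$ be the corresponding element of $\mathcal V(\bx_k)$. By the definition of $V(\by_\ell,\lambda_k)$ as a closed convex hull of all such discrete gradients, $\bg_\ell\in V(\by_\ell,\lambda_k)$. The assumed pointwise error bound then yields $\bg_\ell\in\partial f(\by_\ell)+B(0,\delta_k)$, so $\bg_\ell=\bw_\ell+\bu_\ell$ with $\bw_\ell\in\partial f(\by_\ell)$ and $\|\bu_\ell\|<\delta_k$ (or $\le\delta_k$ if $B$ is read as closed). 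Since $\|\by_\ell-\bx_k\|\le\varepsilon_k$, the union in \eqref{e.Goldsteinsubb} with $\varepsilon=\varepsilon_k$ contains $\partial f(\by_\ell)$, so $\bw_\ell\in\partial_{\varepsilon_k}f(\bx_k)$. Hence every generator lies in $S_k:=\partial_{\varepsilon_k}f(\bx_k)+B(0,\delta_k)$.

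\emph{Step 2 (convex combinations).} $S_k$ is the Minkowski sum of two convex sets: $\partial_{\varepsilon_k}f(\bx_k)$ is convex by construction as a closed convex hull, and the ball is convex. So $S_k$ is convex. Concretely, for $\bv=\sum_\ell\mu_\ell\bg_\ell$ with $\mu_\ell\ge0$ and $\sum_\ell\mu_\ell=1$, we write $\bv=\sum_\ell\mu_\ell\bw_\ell+\sum_\ell\mu_\ell\bu_\ell$. The first sum lies in $\partial_{\varepsilon_k}f(\bx_k)$ by convexity. The second has norm at most $\sum_\ell\mu_\ell\|\bu_\ell\|$, which is strictly less than $\delta_k$ (or at most $\delta_k$ in the closed case). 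Therefore $\conv(\mathcal V(\bx_k))\subset S_k$, which is \eqref{eq:Vk_goldstein}. The hypotheses $\lambda_k\to0$, $\varepsilon_k\to0$ and $\delta_k\to0$ are not needed for the inclusion at a fixed $k$; they only make the enlargement asymptotically vanishing.

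\emph{Main obstacle.} The only delicate point is bookkeeping rather than analysis. We must make sure the result does not silently rely on the uniform hypothesis behind \eqref{eq:dg_goldstein}. We must also handle the degenerate case $\delta_k=0$: then we interpret $B(0,0)$ as $\{0\}$, or use closed balls throughout. The convex-combination argument goes through unchanged in either reading.
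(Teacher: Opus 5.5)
Your proposal is correct and follows essentially the same route as the paper. Each generator $\bg_\ell$ is written as some $\bw_\ell\in\partial f(\by_\ell)$ plus a perturbation of norm at most $\delta_k$, and convex combinations are then handled using convexity of the ball and the inclusion $\bigcup_\ell\partial f(\by_\ell)\subset\bigcup_{\|\by-\bx_k\|\le\varepsilon_k}\partial f(\by)$. The only cosmetic difference is the order: you place each $\bw_\ell$ in $\partial_{\varepsilon_k}f(\bx_k)$ first and then use its convexity, whereas the paper first forms $\conv(\bigcup_\ell\partial f(\by_\ell))$ and then embeds it in the closed convex hull.
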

\begin{proof}
Each element $\bg_\ell \in \mathcal{V}(\bx_k)$ is a discrete gradient computed at a point $\by_\ell$ with $\|\by_\ell - \bx_k\| \le \varepsilon_k$, and  a parameter $\lambda_k$. By the discrete-gradient approximation property \eqref{eq:dg_clarke}, for each $\ell$ there exists $\bw_\ell \in \partial f(\by_\ell)$ such that $\|\bg_\ell - \bw_\ell\|
\le \delta_k$. Hence,
\[
\bg_\ell \in \partial f(\by_\ell) + B(0,\delta_k).
\]
Taking convex combinations, for any $\bv \in \conv(\mathcal{V}(\bx_k))$, we can write
\[
\bv = \sum_{\ell=1}^{m_k} p_\ell \bg_\ell,
\qquad
\sum_{\ell=1}^{m_k} p_\ell = 1,\qquad p_\ell \ge 0.
\]
For each $\ell$, there exists $\bw_\ell \in \partial f(\by_\ell)$ such that
$\|\bg_\ell - \bw_\ell\| \le \delta_k$, hence $\bg_\ell = \bw_\ell + \be_\ell$
with $\be_\ell \in B(0,\delta_k)$. Therefore,
\[
\bv
= \sum_{\ell=1}^{m_k} p_\ell \bw_\ell
+ \sum_{\ell=1}^{m_k} p_\ell \be_\ell.
\]
Since $\bw_\ell \in \partial f(\by_\ell)$ for each $\ell$, it follows that
\[
\sum_{\ell=1}^{m_k} p_\ell \bw_\ell 
\in 
\conv\Big(\bigcup_{\ell=1}^{m_k} \partial f(\by_\ell)\Big),
\]
and since $B(0,\delta_k)$ is convex, $\D\sum_{\ell=1}^{m_k} p_\ell \be_\ell \in B(0,\delta_k)$. Hence,
\[
\bv \in
\conv\Big( \bigcup_{\ell=1}^{m_k} \partial f(\by_\ell) \Big)
+ B(0,\delta_k),
\]
which implies $\conv(\mathcal{V}(\bx_k))
\subset
\conv\Big( \bigcup_{\ell=1}^{m_k} \partial f(\by_\ell) \Big)
+ B(0,\delta_k)$. Since $\|\by_\ell - \bx_k\| \le \varepsilon_k$, it follows that
\[
\bigcup_{\ell=1}^{m_k} \partial f(\by_\ell)
\subset
\bigcup_{\|\by-\bx_k\|\le \varepsilon_k} \partial f(\by).
\]
Therefore,
\[
\conv(\mathcal{V}(\bx_k))
\subset
\cl\conv\Big(
\bigcup_{\|\by-\bx_k\|\le \varepsilon_k} \partial f(\by)
\Big)
+ B(0,\delta_k)
\overset{\eqref{e.Goldsteinsubb}}{=}
\partial_{\varepsilon_k} f(\bx_k)
+ B(0,\delta_k),
\]
so that \eqref{eq:Vk_goldstein} is obtained.
\end{proof}

\begin{rem}[Role of discrete-gradient properties]
\label{rem:dg_roles}
The inclusion \eqref{eq:dg_clarke} is the key property used in the proof above. It ensures that discrete gradients are contained in a controlled neighborhood of the Clarke subdifferential.

The optimality condition \eqref{eq:dg_optimality}, which guarantees that $0 \in V(\bx^*,\lambda)$ at local minimizers, is not required for the present approximation result, but will play a role later in the stationarity and convergence analysis.
\end{rem}

\paragraph{Discrete-gradient model error and sources of approximation.}
Assumption~(A2) provides the basic consistency property of the
discrete-gradient construction. In particular, discrete gradients computed at
points sufficiently close to a reference point, and with sufficiently small
discretization parameter, belong to a small enlargement of the Clarke
subdifferential over a neighborhood of that reference point. Therefore, the
error appearing in the discrete-gradient approximation is not noise in the
objective values. Rather, it is a deterministic model error caused by replacing
exact Clarke subgradient information by computable discrete gradients. In the
present framework, this approximation is governed by three quantities: the
Goldstein radius $\varepsilon_k$, the discretization error $\delta_k^{\mathrm{disc}} := \delta(\lambda_k)$, arising from the discrete-gradient construction, and the sampling/modeling
error $\delta_k^{\mathrm{samp}}$, accounting for possible finite-sampling,
inexact construction, or model-reduction effects in the finite set
$\mathcal V(\bx_k)$. Accordingly, the total approximation error is $\bar{\delta}_k
:=
\delta_k^{\mathrm{disc}}
+
\delta_k^{\mathrm{samp}}$.

The discretization error controls the ideal neighborhood-based discrete-gradient
set in the sense that
\[
V_{\varepsilon_k}(\bx_k,\lambda_k)
\subset
\partial_{\varepsilon_k} f(\bx_k)
+
B(0,\delta_k^{\mathrm{disc}}),
\]
provided that the pointwise discrete-gradient approximation error is uniform over
the ball $\{\by:\|\by-\bx_k\|\le\varepsilon_k\}$. The finite sampled model is
assumed to satisfy
\[
\conv(\mathcal V(\bx_k))
\subset
V_{\varepsilon_k}(\bx_k,\lambda_k)
+
B(0,\delta_k^{\mathrm{samp}}).
\]
In the exact finite-sampling case considered in \eqref{eq:Vk_inclusion}, this
relation holds with $\delta_k^{\mathrm{samp}}=0$. A positive
$\delta_k^{\mathrm{samp}}$ allows for additional finite-sampling, inexact
construction, or model-reduction errors. Combining the two relations yields
the finite-sample approximation property
\begin{equation}\label{eq:Vk_goldstein_bar}
\conv(\mathcal{V}(\bx_k))
\subset
\partial_{\varepsilon_k} f(\bx_k) + B(0,\bar{\delta}_k).
\end{equation}
Thus, $\bar{\delta}_k$ should be interpreted as the total model approximation
error between the finite computable discrete-gradient model
$\conv(\mathcal V(\bx_k))$ and the Goldstein subdifferential model
$\partial_{\varepsilon_k}f(\bx_k)$.

The overall approximation quality is governed by
$\varepsilon_k$, $\delta_k^{\mathrm{disc}}$, and
$\delta_k^{\mathrm{samp}}$. We assume that
\[
\delta_k^{\mathrm{disc}}\to0,
\quad \text{and} \quad
\delta_k^{\mathrm{samp}}\to0,
\]
as $\lambda_k\to0$ and the sampling/model construction becomes sufficiently
accurate. In statements involving the finite sampled set $\mathcal V(\bx_k)$,
we use the total error $\bar{\delta}_k$, while results concerning the ideal
neighborhood-based discrete-gradient construction
$V_{\varepsilon_k}(\bx_k,\lambda_k)$ involve only the discretization error
$\delta_k^{\mathrm{disc}}$.

Consequently, if $\min_{\bv\in\conv(\mathcal V(\bx_k))}\|\bv\|\to0$, $\varepsilon_k\to0$, $\bar\delta_k\to0$, then $\dist(0,\partial_{\varepsilon_k}f(\bx_k))\to0$. Indeed, by \eqref{eq:Vk_goldstein_bar}, for every
$\bv_k\in\conv(\mathcal V(\bx_k))$ there exists
$\bw_k\in\partial_{\varepsilon_k}f(\bx_k)$ such that
\[
\|\bv_k-\bw_k\|\le \bar\delta_k.
\]
Hence,
\begin{equation}\label{eq:dg_stationarity_residual}
\dist(0,\partial_{\varepsilon_k}f(\bx_k))
\le
\|\bv_k\|+\bar\delta_k.
\end{equation}
Taking $\bv_k$ as a minimal-norm element of
$\conv(\mathcal V(\bx_k))$ gives the assertion. Therefore, if there exists a subsequence $K\subset\mathbb N$ such that
\[
\bx_k\to\bx^\ast,
\quad
\min_{\bv\in\conv(\mathcal V(\bx_k))}\|\bv\|\to0,
\quad
\varepsilon_k\to0,
\quad \text{and} \quad
\bar\delta_k\to0
\quad
(k\in K),
\]
then the standard outer-semicontinuity argument for the Clarke
subdifferential yields $0\in\partial f(\bx^\ast)$, so that $\bx^\ast$ is a Clarke stationary. If, instead, only a uniform bound $\bar\delta_k\le \bar\delta$ is available, then exact Clarke stationarity cannot be concluded in general.
The conclusion is only approximate Goldstein--Clarke stationarity, with a
stationarity residual controlled by $\bar\delta$.

\subsection{Minimal-Norm Approximation and Quadratic Programming Formulation}
\label{subsec:min_norm_qp}

\paragraph{Approximate minimal-norm problem.}
In view of the inclusion \eqref{eq:Vk_goldstein}, we consider the problem of computing the minimal-norm element
of $\conv(\mathcal{V}(\bx_k))$ as a computable surrogate of the
minimal-norm element of $\partial_{\varepsilon_k} f(\bx_k)$.

\paragraph{Minimal-norm element.} 
Let $\mathcal{V}(\bx_k) = \{\bg_1,\bg_2,\dots,\bg_m\}$ and define the matrix
\[
V_k := \big[ \bg_1 \ \bg_2\ \cdots \ \bg_m \big] \in \R^{n\times m}.
\]
Any $\bv \in \conv(\mathcal{V}(\bx_k))$ can be written as $\bv = V_k \bp$, where $\bp \in \R^m$ satisfies
\[
\sum_{\ell=1}^m p_\ell = 1,\qquad p_\ell\geq0.
\]

The vector $\bv_k$ is the projection of the origin onto $\conv(\mathcal V(\bx_k))$:
\begin{equation}\label{eq:min_norm_dg_projection}
\bv_k = \operatorname{proj}_{\conv(\mathcal V(\bx_k))}(0),
\end{equation}
and $\|\bv_k\|$ represents the distance from the origin to the sampled approximation of the subdifferential. Since $\conv(\mathcal V(\bx_k))$ is a nonempty compact convex set, the
projection of the origin \eqref{eq:min_norm_dg_projection} onto this set exists and is unique. Hence, the problem
\begin{equation}\label{eq:prob_vk_goldstein}
\bv_k
=
\argmin_{\bv \in \conv(\mathcal{V}(\bx_k))}
\|\bv\|
\end{equation}
admits a unique solution $\bv_k=V_k\bp^\ast$. However, the coefficient vector  $\bp^\ast$ solving the simplex-constrained quadratic program need not be unique when the columns of \(V_k\) are linearly dependent.

Finding the approximate minimal-norm element \eqref{eq:prob_vk_goldstein} is equivalent to the convex quadratic program
\begin{equation}\label{QPalpha}
\min_{\bp \in \R^m}
\frac12\|V_k\bp\|^2
\quad
\text{s.t.}
\sum_{\ell=1}^m p_\ell = 1,\qquad p_\ell\geq0.
\end{equation}
Using $\|V_k \bp\|_2^2 = \bp^\top V_k^\top V_k \bp$, we define the Gram matrix
\[
G_k := V_k^\top V_k \in \R^{m\times m}.
\]
Equivalently, with this $G_k$, the problem \eqref{QPalpha} can be written as
\[
\min_{\bp\in\mathbb R^m}
\frac12 \bp^\top G_k\bp
\quad
\text{s.t.}
\quad
\sum_{\ell=1}^m p_\ell=1,\quad p_\ell\ge0,
\]
which is a convex quadratic program on the probability simplex. Let $\bp^\ast$ denote an optimal solution of \eqref{QPalpha}. The approximate minimal-norm element is $\bv_k = V_k \bp^\ast$. By \eqref{eq:Vk_goldstein}, $\bv_k$ can be interpreted as an approximate minimal-norm element of $\partial_{\varepsilon_k} f(\bx_k)$, up to an error of order $\delta_k$.

\begin{comment}
\paragraph{Remark on conditioning.}
The numerical stability of \eqref{QPalpha} depends on the conditioning of $G_k = V_k^\top V_k$. Approximate linear dependence among $\{\bg_i\}$ or large magnitude imbalances may lead to ill-conditioning, affecting the reliability of $\bp^\ast$. Practical stabilization strategies will be discussed in Section~\ref{MainCSG}.
\end{comment}

\subsection{Approximate Descent Directions and Existing Approaches}
\label{subsec:approx_descent}

\paragraph{Three-layer structure.}

The proposed framework can be decomposed into three conceptual layers:
\begin{itemize}
\item[(i)] \bfi{Sampling layer:} discrete gradients are computed along directions $\bd \in D$ with $\|\bd\|=1$ to construct a finite set $\mathcal{V}(\bx_k)$;

\item[(ii)] \bfi{Model layer:} the convex hull $\conv(\mathcal{V}(\bx_k))$ is used as a computable finite sampled model associated with the Goldstein subdifferential $\partial_{\varepsilon_k} f(\bx_k)$;

\item[(iii)] \bfi{Optimization layer:} a minimal-norm stationarity vector $\bv_k$ is computed by solving \eqref{eq:prob_vk_goldstein}.
\end{itemize}

The normalization constraint $\|\bd\|=1$ is required only in the sampling layer and does not restrict the optimization layer.

\paragraph{Approximate descent directions.}

For a locally Lipschitz function $f$, the computation of a descent direction can be formulated via the minimal-norm element of the Goldstein $\varepsilon$-subdifferential:
\begin{equation}\label{eq:prob1}
    \widehat{\bv}
    :=
    \argmin_{\bv \in \partial_{\varepsilon} f(\bx)}
    \|\bv\|.
\end{equation}
If $\widehat{\bv} \neq 0$, the corresponding $\varepsilon$-steepest descent direction is $\bd_s := -\widehat{\bv}/\|\widehat{\bv}\|$. However, solving \eqref{eq:prob1} is generally impractical in derivative-free settings, since it requires full knowledge of $\partial_\varepsilon f(\bx)$. To overcome this limitation, discrete approximations can be employed. Let $V(\bx,\lambda)$ denote a discrete-gradient set. It is known (see \cite[Proposition~5.4]{Bagirov2007}) that if
\[
\bv^\lambda
:=
\argmin_{\bv \in V(\bx,\lambda)} \|\bv\|
\quad \text{and} \quad \bv^\lambda \neq 0,
\]
then the normalized direction $\bd^\lambda := -\bv^\lambda/\|\bv^\lambda\|$ is a descent direction at $\bx$. This guarantee relies on access to the full set $V(\bx,\lambda)$.

In practice, only a finite subset is available, and the descent property may be degraded due to approximation errors.

A related strategy based on subgradient sampling has been proposed by Maleknia and Soleimani--Damaneh \cite{Maleknia2024}. They construct an inner approximation $\conv(\mathcal G_\varepsilon(\bx)) \subset \partial_\varepsilon f(\bx)$, and compute
\begin{equation}\label{eq:approx_min_norm}
\widehat{\bu}
:=
\argmin_{\bu \in \conv(\mathcal G_\varepsilon(\bx))}
\|\bu\|.
\end{equation}
The direction $\overline{\bd}_s := -\widehat{\bu}/\|\widehat{\bu}\|$ approximates the $\varepsilon$-steepest descent direction and admits a guaranteed enrichment mechanism based on exact subgradients.

\paragraph{Discrete-gradient-based directions.}

In the present work, we adopt a similar minimal-norm framework, but replace exact subgradients by discrete-gradient approximations. Specifically, we construct a finite set $\mathcal{V}(\bx_k)$ using only function evaluations and compute $\bv_k$ by \eqref{eq:prob_vk_goldstein}.

A natural reference direction associated with the minimal-norm model is $-\bv_k$. Indeed, since
$\bv_k$ is computed by \eqref{eq:prob_vk_goldstein}, the vector $-\bv_k$ may be interpreted as a computable approximation of an $\varepsilon$-steepest descent direction induced by the sampled
subdifferential model. Due to the approximation property \eqref{eq:Vk_goldstein_bar}, that is
\[
\conv(\mathcal{V}(\bx_k))
\subset
\partial_{\varepsilon_k} f(\bx_k) + B(0,\bar{\delta}_k),
\]
the vector $\bv_k$ provides approximate first-order information about
the local behavior of $f$. In particular, when the approximation error
$\bar\delta_k$ is sufficiently small, one heuristically expects
descent-oriented behavior along directions sufficiently aligned with
$-\bv_k$.

However, in the proposed framework, the vector $-\bv_k$ is not imposed
as the actual search direction. Instead, $\bv_k$ primarily serves as a
computable stationarity and orientation measure derived from the
approximate subdifferential model. The search direction may then be
generated using additional algorithmic mechanisms that exploit
information accumulated across iterations, while preserving suitable
descent properties relative to $\bv_k$.

This separation between the approximation model and the direction
generation mechanism is fundamental in the derivative-free setting.
It allows the incorporation of memory-based or curvature-enhanced
search directions without modifying the underlying stationarity
framework. In particular, Section~\ref{MainCSG} introduces matrix
conjugate subgradient directions constructed from the vectors
$\{\bv_k\}_{k\in\Nz_0}$ together with previous search information.

\section{Matrix Conjugate Subgradient Method}\label{MainCSG}

In this section, we extend the matrix parametric approach
(described in the supplemental material of the present paper~\cite[Section 1]{suppMat}) to a nonsmooth setting by employing
discrete-gradient approximations of the Goldstein
$\varepsilon$-subdifferential \cite{Bagirov2007,Karmitsa2012,Maleknia2024}. The resulting framework
is embedded into a globalization mechanism designed to promote stability
and convergence toward Clarke stationary points while remaining entirely
Hessian-free \cite{audet2006mesh,FasLLR14,Garmanjani2016}.

In pursuit of flexibility, diversity, and generality in optimization algorithms, we here employ a matrix-based formulation of the \texttt{CG} parameter in our algorithmic framework, as opposed to the traditional scalar parameter settings. This extended approach enhances a conjugacy factor, called the degree of conjugacy, by integrating additional recent search directions into the method, while ensuring global convergence via suitable safeguard mechanisms.

As is well-known from classical literature, to solve the unconstrained optimization problem \eqref{UP}, \texttt{CG} iterations are recursively updated by
\begin{equation}\label{xk1}
\bx_0 \in \mathbb{R}^n,\qquad \bx_{k+1} := \bx_k + \bs_k,\qquad \bs_k := t_k \bd_k^{\CG},
\end{equation}
for all $k \in \mathbb N_0$, in which $t_k \in (0,+\infty)$ is the step size computed via a \texttt{LiS} technique along the (descent) direction $\bd_k^{\CG}$, successively generated by 
\begin{equation}\label{dk1}
 \bd_{0}^{\CG} := -\nabla f_0, \qquad \bd_{k+1}^{\CG} := -\nabla f_{k+1} + \beta_k^{\CG} \bd_k^{\CG}, \qquad \forall k \in \mathbb N_0,
\end{equation}
with $\beta_k \in \mathbb{R}$ referred to as the \texttt{CG} parameter.

To the best of our knowledge, the computational performance of \texttt{CG} methods is strongly influenced by the particular choice of the parameter $\beta_k$ \cite{AndreiComparison}. Existing analytical developments aimed at improving the theoretical properties or numerical performance of classical \texttt{CG} algorithms have largely been based on scalar parameterizations \cite{SBKRairo,HagerZhang}, or, at most, on vector-based extensions \cite{ZhangZhouLiOMS}, which may be regarded as ``one-dimensional'' improvement strategies. In this sense, the current literature reveals a notable lack of analytical motivation for multidimensional generalizations. As previously mentioned, such extensions can offer several advantages, including increased flexibility of \texttt{CG} algorithms, the incorporation of richer theoretical structures, and, ultimately, improved robustness of the resulting methods.

Here, we present a general formulation for parameterizing the \texttt{CG} direction \eqref{dk1} in a matrix-based framework. We begin by formally replacing the scalar parameter $\beta_k$ with $\beta_k \mathbf{I}$, where $\mathbf{I} \in \mathbb{R}^{n \times n}$ denotes the identity matrix. To further enhance the dimensionality of the parametric adjustment—and consequently increase the flexibility of the algorithm—we extend this formulation by introducing a matrix parameter $\boldsymbol{\ss}_k \in \mathbb{R}^{n \times n}$ in place of $\beta_k \mathbf{I}$. Accordingly, the search direction \eqref{dk1} is reformulated as
\begin{eqnarray}\label{dkB1}
 \bd_{0}^{\CG} := -\nabla f_{0}, \qquad \bd_{k+1}^{\CG} := -\nabla f_{k+1} + \boldsymbol{\ss}_k \bd_k^{\CG}, \qquad \forall k \in \mathbb N_0.
\end{eqnarray}

It is worth noting, however, that employing a dense matrix $\boldsymbol{\ss}_k$ in \eqref{dkB1} may incur significant computational overhead, particularly for large-scale problems. To maintain computational efficiency, it is therefore natural and practical to impose sparsity on $\boldsymbol{\ss}_k$, or alternatively, to adopt a limited-memory form of this matrix. 

To ensure the global convergence of the corresponding \texttt{CG} algorithm, it is necessary to impose appropriate safeguards on the search direction formula given by \eqref{dkB1}. These include preserving the SDC and ensuring that the sequence $\{\|\boldsymbol{\ss}_k \bd_k\|\}_{k \geq 0}$ remains uniformly bounded \cite{SugikiNarushimaYabe}. Such conditions can, for instance, be enforced through suitable restart schemes \cite{DaiLiaoNCG}. On the other hand, determining an appropriate sparsity structure for the matrix $\boldsymbol{\ss}_k$, as well as devising an effective strategy for computing its entries based on key properties of limited-memory algorithms, is of significant importance. In this regard, one possible approach is outlined in \cite[Section 1]{suppMat}, where $\boldsymbol{\ss}_k$ is first modeled as a banded tridiagonal matrix, and its entries are subsequently determined by exploiting more recent search directions to enhance conjugacy.

Now, we discuss how the proposed \texttt{CG} direction can be incorporated into our conjugate subgradient algorithm. To proceed, first by replacing $\nabla f_k$ in \eqref{dkB1} by $\bv_k$ defined by solving \eqref{eq:prob_vk_goldstein}, we define the \texttt{MatCSG} direction as
\begin{equation}\label{dkCSG}
\bd_0^{\MCS} := -\bv_0, \qquad 
\bd_{k+1}^{\MCS} := -\bv_{k+1} + \boldsymbol{\ss}_k \bd_k^{\MCS},
\end{equation}
where $\boldsymbol{\ss}_k\in\R^{n\times n}$ is a sparse matrix parameter as already introduced (see also Section 1 in \cite{suppMat}). Accordingly, we define the subgradient displacement vector as follows by 
\[
\bq_k := \bv_{k+1} - \bv_k.
\]

Unlike classical \texttt{CG} methods, where $\nabla f_k$ represents exact first-order information, the vectors $\bv_k$ here are approximate subgradient models constructed from sampled discrete gradients. Consequently, the term $\boldsymbol{\ss}_k \bd_k^{\MCS}$ should be interpreted as a heuristic memory mechanism that enriches the search direction using past information, rather than enforcing exact conjugacy.

Since $f$ is locally Lipschitz, the exact first-order directional behavior is characterized by the Clarke directional derivative
\[
f^\circ(\bx;\bd)
=
\max_{\bg\in\partial f(\bx)}
\bg^\top \bd.
\]
However, in the present framework, the vectors $\bv_k$ are computed from sampled discrete gradients and therefore only provide approximate first-order information. Consequently, the directional quantity
\[
\bv_{k+1}^\top \wt\bd_{k+1}^{\MCS}
\]
cannot, in general, be identified with the exact directional derivative. To establish sufficient decrease, it is therefore necessary to relate the exact Clarke directional derivative to the approximate directional model induced by $\bv_{k+1}$. This is achieved through the directional consistency estimate introduced below, which bounds the discrepancy between the exact subdifferential model and the approximate discrete-gradient model along the search direction.

The following result is a direct Euclidean analogue of
\cite[Proposition~3.1]{Neumaier2024}. Its proof follows the same algebraic
argument after identifying \(g_I\) with \(\bv_{k+1}\), \(q_I\) with
\(\bd_{k+1}^{\MCS}\), \(B_{II}\) with the identity matrix, and \(\delta\)
with \(\varrho\).

\begin{thm}[Angle enforcement and descent property of the \texttt{MatCSG} direction]
\label{thm.angle-enforce}
Let $\bv_{k+1} \neq 0$ be defined by
\eqref{eq:prob_vk_goldstein}, where $\mathcal V(\bx_{k+1})$ is the discrete-gradient sample defined by \eqref{eq:defmatVk}. Let $\bd_{k+1}^{\MCS}\in\R^n\setminus\{0\}$ be a search direction and define
\begin{equation}\label{eq:omega12}
\omega_{1,k+1} := \|\bv_{k+1}\|^2, \qquad
\omega_{2,k+1} := \|\bd_{k+1}^{\MCS}\|^2, \qquad
\omega_{k+1} := \bv_{k+1}^\top \bd_{k+1}^{\MCS}.
\end{equation}

Moreover, let $\varrho\in(0,1)$ and define
\[
c_{k+1} := \frac{\omega_{k+1}}{\sqrt{\omega_{1,k+1} \omega_{2,k+1}}} \in [-1,1], 
\qquad
w_{k+1} := \frac{\omega_{1,k+1} \omega_{2,k+1} (1 - c_{k+1}^2)}{1 - \varrho^2} \ge 0.
\]

If $\omega_{k+1} \le -\varrho \sqrt{\omega_{1,k+1} \omega_{2,k+1}}$, then $\bd_{k+1}^{\MCS}$
already satisfies the bounded angle condition and no correction is needed.
Otherwise, the corrected direction
\begin{equation}\label{e.dcorr}
\wt\bd_{k+1}^{\MCS} := \bd_{k+1}^{\MCS} - \varpi_{k+1} \bv_{k+1}
\end{equation}
is defined, where
\begin{equation}\label{e.lambda}
\varpi_{k+1} :=
\frac{\omega_{k+1} + \varrho \sqrt{w_{k+1}}}{\omega_{1,k+1}}.
\end{equation}

Then, the corrected direction satisfies the exact bounded angle condition
\begin{equation}\label{e.angle}
\frac{\bv_{k+1}^\top \wt\bd_{k+1}^{\MCS}}
{\|\bv_{k+1}\|\,\|\wt\bd_{k+1}^{\MCS}\|}
= -\varrho < 0,
\end{equation}
and therefore is a descent direction relative to the approximate first-order model induced by $\bv_{k+1}$. Moreover, since
\[
\bd_{k+1}^{\MCS}
=
-\bv_{k+1}+\boldsymbol{\ss}_k\bd_k^{\MCS},
\]
the corrected direction $\wt\bd_{k+1}^{\MCS}$ may be interpreted as a
safeguarded version of the memory-enhanced direction
$\bd_{k+1}^{\MCS}$, obtained by shifting it toward the approximate descent
direction $-\bv_{k+1}$ until the prescribed bounded angle condition holds.
\end{thm}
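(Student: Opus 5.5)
The plan is a direct algebraic verification: compute the numerator and the squared norm of the corrected direction \eqref{e.dcorr} in terms of the three scalars $\omega_{1,k+1},\omega_{2,k+1},\omega_{k+1}$ from \eqref{eq:omega12}, and show that their ratio is exactly $-\varrho$. This mirrors \cite[Proposition~3.1]{Neumaier2024} with $B_{II}=\mathbf I$. To lighten notation I drop the index $k+1$ and write $a:=\omega$ and $s:=\varrho\sqrt{w}\ge 0$, so that $\varpi=(a+s)/\omega_1$.

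\emph{Step 1 (the uncorrected case).} The first claim is immediate. If $\omega\le-\varrho\sqrt{\omega_1\omega_2}$, then $c\le-\varrho$, which is the bounded angle condition with an inequality, so no correction is needed.

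\emph{Step 2 (numerator).} Using \eqref{e.dcorr} and \eqref{e.lambda},
\[
\bv^\top\wt\bd=a-\varpi\,\omega_1=a-(a+s)=-s=-\varrho\sqrt{w}.
\]

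\emph{Step 3 (norm).} Expanding the square gives
\[
\|\wt\bd\|^2=\omega_2-2\varpi a+\varpi^2\omega_1 .
\]
Substituting $\varpi=(a+s)/\omega_1$, this becomes $\omega_2+(s^2-a^2)/\omega_1$. Next, the definition of $w$ gives $\omega_1\omega_2-a^2=\omega_1\omega_2(1-c^2)=(1-\varrho^2)w$. Together with $s^2=\varrho^2 w$, this yields
\[
\omega_1\|\wt\bd\|^2=(1-\varrho^2)w+\varrho^2 w=w,
\qquad\text{hence}\qquad
\|\wt\bd\|=\sqrt{w/\omega_1}.
\]

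\emph{Step 4 (the ratio).} Dividing,
\[
\frac{\bv^\top\wt\bd}{\|\bv\|\,\|\wt\bd\|}
=\frac{-\varrho\sqrt w}{\sqrt{\omega_1}\,\sqrt{w/\omega_1}}=-\varrho .
\]
This is \eqref{e.angle}. Since $\bv^\top\wt\bd<0$, the corrected direction is a descent direction for the linear model $\bd\mapsto\bv^\top\bd$. The interpretive statement then follows by inserting $\bd^{\MCS}_{k+1}=-\bv_{k+1}+\boldsymbol{\ss}_k\bd^{\MCS}_k$ into \eqref{e.dcorr}: the correction only adds a multiple of $-\bv_{k+1}$.

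\emph{Main obstacle.} The one delicate point is the division in Step 4, which requires $w>0$, i.e.\ $c^2<1$. I would treat this as a degenerate case.
\begin{itemize}
\item If $c^2=1$, then $\bd$ is parallel to $\bv$.
\item If $c=-1$, we are in the no-correction case of Step 1.
\item If $c=+1$, then $w=0$ and $\wt\bd=\bd-(a/\omega_1)\bv=0$. In this case the angle condition cannot hold, and \eqref{e.angle} is undefined.
\end{itemize}
I would therefore state explicitly that the correction is applied only when $w>0$, which in the corrected branch amounts to excluding $c=1$, and I would point to this as a case requiring a separate safeguard, such as a restart with $\bd=-\bv$. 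Apart from this, everything is routine algebra.
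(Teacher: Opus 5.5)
Your proof is correct and is the same direct algebraic verification the paper defers to (the Euclidean case $B_{II}=\mathbf I$ of \cite[Proposition~3.1]{Neumaier2024}): the numerator equals $-\varrho\sqrt{w_{k+1}}$, and $\omega_{1,k+1}\|\wt\bd_{k+1}^{\MCS}\|^2=w_{k+1}$, so the ratio is exactly $-\varrho$. Your degenerate-case remark is also right and goes beyond the paper's statement: when $\bd_{k+1}^{\MCS}$ is a positive multiple of $\bv_{k+1}$ (so $c_{k+1}=1$ and $w_{k+1}=0$), the correction gives $\wt\bd_{k+1}^{\MCS}=0$ and \eqref{e.angle} is undefined, so the corrected branch needs the extra hypothesis $w_{k+1}>0$ or a restart safeguard.
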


\section{Two-Point Descent Procedure via Discrete Gradients}
\label{subsecTPLS}

Throughout this section, $k$ denotes the iteration counter of the main
loop of Algorithm~\ref{alg:DG-MatCSG}, which is introduced in Section \ref{sec_stationarity_algorithm}. At each outer iteration $k$, Algorithm~\ref{alg:DG-MatCSG}
invokes the two-point \texttt{LiS} procedure (Algorithm~\ref{AlgTPLSDG}, below), which is developed in the present section. Thus, within Algorithm~\ref{AlgTPLSDG}, the index $i$ is used for the
inner \texttt{LiS} iterations, while $k$ remains fixed and refers to the
current outer iteration.

The bounded angle condition established in
Theorem~\ref{thm.angle-enforce} ensures that the corrected direction
$\wt\bd_k^{\MCS}$ satisfies $\bv_k^\top \wt\bd_k^{\MCS} < 0$, so that $\wt\bd_k^{\MCS}$ is descent-oriented with respect to the
approximate subgradient model generated by $\bv_k$.

Before introducing the two-point \texttt{LiS} procedure, we record a
sufficient condition under which the bounded-angle corrected direction
admits an Armijo step. This result clarifies the role of the directional
model error assumption (A5). It is not needed for the finite termination of
Algorithm~\ref{AlgTPLSDG}, which is proved later under the weak upper
semismoothness and directional consistency assumption (A4).

\begin{prop}[Armijo-type decrease for the corrected \texttt{MatCSG} direction]
\label{prop:MatCSG-armijo-final}
Assume that (A1) and (A5) hold. Let
$\wt\bd_{k+1}^{\MCS}\in\mathbb{R}^n\setminus\{0\}$ and
$\bv_{k+1}\neq0$ satisfy the bounded angle condition
\[
\bv_{k+1}^\top \wt\bd_{k+1}^{\MCS}
\le
-\varrho
\|\bv_{k+1}\|
\,
\|\wt\bd_{k+1}^{\MCS}\|
<0,
\]
for some $\varrho\in(0,1)$. Then, for every
$\mu_1\in(0,1-\theta)$, there exists $t_k>0$ such that
\[
f(\bx_{k+1} + t_k \wt\bd_{k+1}^{\MCS})
\le
f(\bx_{k+1})
+
\mu_1\, t_k\, \bv_{k+1}^\top \wt\bd_{k+1}^{\MCS}.
\]
\end{prop}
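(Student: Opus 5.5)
The plan is to use the error bound (A5) to turn the bounded angle condition into a strictly negative Clarke directional derivative along $\wt\bd_{k+1}^{\MCS}$. Then the definition of $f^\circ$ as a limsup over $t\downarrow0$ (with $\by=\bx_{k+1}$ fixed) gives a whole interval of Armijo-admissible step sizes. Write $\bd:=\wt\bd_{k+1}^{\MCS}$, $\bv:=\bv_{k+1}$, $\bx:=\bx_{k+1}$. We apply (A5) at iteration $k+1$ with $\xi:=\xi_{k+1}$, so that $|f^\circ(\bx;\bd)-\bv^\top\bd|\le\xi\|\bd\|$ and $\xi\le\theta\varrho\|\bv\|$.

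\emph{Step 1: bound the Clarke derivative.} From \eqref{eq:dir_error_bound}, \eqref{eq:dir_error_small} and the bounded angle condition,
\[
f^\circ(\bx;\bd)\le \bv^\top\bd+\xi\|\bd\|\le \bv^\top\bd+\theta\varrho\|\bv\|\|\bd\|\le (1-\theta)\,\bv^\top\bd<0 .
\]
The last inequality uses $\varrho\|\bv\|\|\bd\|\le-\bv^\top\bd$.

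\emph{Step 2: pass to one-sided difference quotients.} By (A1), $f^\circ(\bx;\bd)$ is finite. Taking $\by=\bx$ in its definition gives
\[
\limsup_{t\downarrow0}\frac{f(\bx+t\bd)-f(\bx)}{t}\le f^\circ(\bx;\bd)\le(1-\theta)\bv^\top\bd .
\]
Since $\mu_1<1-\theta$ and $\bv^\top\bd<0$, we have $(1-\theta)\bv^\top\bd<\mu_1\bv^\top\bd$. So the gap $\gamma:=(\mu_1-(1-\theta))\bv^\top\bd$ is strictly positive. By the definition of limsup there is $\bar t>0$ such that for all $t\in(0,\bar t)$,
\[
\frac{f(\bx+t\bd)-f(\bx)}{t}\le(1-\theta)\bv^\top\bd+\gamma=\mu_1\bv^\top\bd .
\]
Multiplying by $t>0$ and choosing any $t_k\in(0,\bar t)$ yields the Armijo inequality.

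\emph{Main obstacle.} The argument is short, and the only delicate point is Step 1. The constants must line up so that the model error is absorbed by exactly a $\theta$-fraction of the angle-based decrease $\varrho\|\bv\|\|\bd\|$. This is where the restriction $\mu_1<1-\theta$ and the strict negativity of $\bv^\top\bd$ are both needed. I would also note explicitly that only the inequality $f^\circ\le\bv^\top\bd+\xi\|\bd\|$ from (A5) is used, not the reverse one. Finally, the limsup with fixed base point $\by=\bx$ is dominated by the full Clarke limsup, which is what allows Step 2 to go through without any semismoothness assumption.
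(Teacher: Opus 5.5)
Your proposal is correct and follows the paper's proof essentially step for step: (A5) together with the bounded angle condition gives $f^\circ(\bx;\bd)\le(1-\theta)\bv^\top\bd<\mu_1\bv^\top\bd$, and the fixed-base-point $\limsup$ of the difference quotients, which is bounded above by $f^\circ$, then yields an interval $(0,\bar t)$ of Armijo-admissible steps. The only difference is cosmetic: you measure the gap from the bound $(1-\theta)\bv^\top\bd$, whereas the paper takes $\gamma=\mu_1\bv^\top\bd-f^\circ(\bx;\bd)$ and uses a margin of $\gamma/2$.
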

\begin{proof}
For brevity, set $\bx:=\bx_{k+1}$, $\bd:=\wt\bd_{k+1}^{\MCS}$, $\bv:=\bv_{k+1}$, and $\xi:=\xi_{k+1}$. By (A5), applied with $\bx=\bx_{k+1}$, $\bv=\bv_{k+1}$, and $\bd=\wt\bd_{k+1}^{\MCS}$, the directional model error conditions
\eqref{eq:dir_error_bound} and \eqref{eq:dir_error_small} hold. Hence,
by \eqref{eq:dir_error_small} and the bounded angle condition, we have
\[
\xi\|\bd\|
\le
\theta\varrho\|\bv\|\,\|\bd\|
\le
-\theta\,\bv^\top\bd.
\]
Consequently, using the directional error bound \eqref{eq:dir_error_bound}, we get
\[
f^\circ(\bx;\bd)\le
\bv^\top\bd+\xi\|\bd\|
\le
\bv^\top\bd-\theta\bv^\top\bd
=
(1-\theta)\bv^\top\bd
<0.
\]
Choosing any $\mu_1\in(0,1-\theta)$ and since $\bv^\top\bd<0$, we get
\[
f^\circ(\bx;\bd)\le
(1-\theta)\bv^\top\bd
<
\mu_1\bv^\top\bd,
\]
so that $f^\circ(\bx;\bd)
<
\mu_1\bv^\top\bd$. By the definition of the Clarke directional derivative,
\[
\limsup_{t\downarrow0}
\frac{
f(\bx+t\bd)-f(\bx)
}{t}
\le
f^\circ(\bx;\bd).
\]
Therefore, there exists $\bar t>0$ such that, for all $t\in(0,\bar t]$,
\[
\frac{
f(\bx+t\bd)-f(\bx)
}{t}
\le
f^\circ(\bx;\bd)
+
\frac{\gamma}{2}
<
\mu_1\bv^\top\bd,
\]
where $\gamma
:=
\mu_1\bv^\top\bd
-
f^\circ(\bx;\bd)
>0$. Multiplying by $t>0$ gives
\[
f(\bx+t\bd)
<
f(\bx)
+
\mu_1 t\,\bv^\top\bd.
\]
Taking any $t\in(0,\bar t]$ proves the claim.
\end{proof}

\begin{rem}[On the error condition for Armijo decrease]
\label{rem:error-condition}
The directional model error condition \eqref{eq:dir_error_bound} together with \eqref{eq:dir_error_small} ensures that the directional approximation error is sufficiently small
relative to the predicted descent along the corrected direction
$\wt\bd_{k+1}^{\MCS}$. Indeed, in view of the bounded angle condition \eqref{e.angle}, we have
\[
|\bv_{k+1}^\top \wt\bd_{k+1}^{\MCS}|
\ge
\varrho
\|\bv_{k+1}\|
\,
\|\wt\bd_{k+1}^{\MCS}\|.
\]
Therefore, \eqref{eq:dir_error_small} implies $\xi_{k+1}\|\wt\bd_{k+1}^{\MCS}\|
\le
\theta
|\bv_{k+1}^\top \wt\bd_{k+1}^{\MCS}|$. Thus, the true Clarke directional derivative remains negative and is
controlled by the predicted model decrease.

The admissible directional error level scales proportionally with
$\|\bv_{k+1}\|$. In particular, as $\|\bv_{k+1}\|\to0$, the allowable
directional error must also vanish, ensuring that the model accuracy
increases near stationary points.

From a computational perspective, this requirement is consistent with the
discrete-gradient framework, since the directional model error can be
reduced by refining the discretization parameter, enriching the direction
set, or improving the finite sampled model.

Moreover, within the two-point \texttt{LiS} procedure, failure of the
sufficient decrease condition triggers the generation of additional
discrete gradients. This potentially improves the directional approximation
quality and facilitates satisfaction of the above condition.
\end{rem}

\begin{cor}[Descent implies Armijo admissibility]
\label{cor:descent-armijo}
Under the assumptions of Theorem~\ref{thm.angle-enforce}, let $\bv_{k+1}$
be defined by \eqref{eq:prob_vk_goldstein}. Assume that the directional
model error satisfies \eqref{eq:dir_error_bound}, where $\xi_{k+1}\ge0$. Suppose that there exists $\theta\in(0,1)$ such
that \eqref{eq:dir_error_small} holds. Then, for every $\mu_1\in(0,1-\theta)$, the corrected direction
$\wt\bd_{k+1}^{\MCS}$ admits a step size $t_k>0$ such that
\[
f(\bx_{k+1} + t_k \wt\bd_{k+1}^{\MCS})
\le
f(\bx_{k+1})
+
\mu_1\, t_k\, \bv_{k+1}^\top \wt\bd_{k+1}^{\MCS}.
\]
\end{cor}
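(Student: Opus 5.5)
The plan is to reduce the corollary directly to Proposition~\ref{prop:MatCSG-armijo-final}. That proposition needs two inputs: (A1) together with the error conditions of (A5), and the bounded angle inequality for the pair $(\bv_{k+1},\wt\bd_{k+1}^{\MCS})$.

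The first input comes from the hypotheses. (A1) is standing. The error conditions \eqref{eq:dir_error_bound} and \eqref{eq:dir_error_small} are assumed explicitly in the corollary, at iteration $k+1$ with $\bv=\bv_{k+1}$ and $\bd=\wt\bd_{k+1}^{\MCS}$. That is precisely the instance of (A5) used in the proof of Proposition~\ref{prop:MatCSG-armijo-final}.

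The second input comes from Theorem~\ref{thm.angle-enforce}, and I would split into two cases.
\begin{itemize}
\item If $\omega_{k+1}\le-\varrho\sqrt{\omega_{1,k+1}\omega_{2,k+1}}$, no correction is made. Setting $\wt\bd_{k+1}^{\MCS}:=\bd_{k+1}^{\MCS}$, this inequality is literally $\bv_{k+1}^\top\wt\bd_{k+1}^{\MCS}\le-\varrho\|\bv_{k+1}\|\,\|\wt\bd_{k+1}^{\MCS}\|$.
\item Otherwise, the theorem gives the exact equality \eqref{e.angle} for the corrected direction, which again yields the inequality.
\end{itemize}

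In the second case, \eqref{e.angle} only makes sense if $\wt\bd_{k+1}^{\MCS}\neq0$, so this has to be checked; I expect it to be the only delicate point. First try $\varpi_{k+1}=0$ as the candidate for the correction not moving the direction: then $\omega_{k+1}=-\varrho\sqrt{w_{k+1}}\le0$. This gives $\omega_{k+1}^2(1-\varrho^2)=\varrho^2\omega_{1,k+1}\omega_{2,k+1}(1-c_{k+1}^2)$, i.e. $c_{k+1}^2=\varrho^2$. Together with $\omega_{k+1}\le0$ this means $c_{k+1}=-\varrho$, which contradicts the case assumption. So $\varpi_{k+1}\neq0$, but that alone does not exclude $\bd_{k+1}^{\MCS}=\varpi_{k+1}\bv_{k+1}$.

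That parallel situation needs its own argument. There $c_{k+1}=\pm1$ and $w_{k+1}=0$, so $\varpi_{k+1}=\omega_{k+1}/\omega_{1,k+1}$ and $\wt\bd_{k+1}^{\MCS}=0$. This degenerate subcase, with $\bd_{k+1}^{\MCS}$ a positive multiple of $\bv_{k+1}$, must be excluded. I would handle it by appealing to the theorem's assertion that the corrected direction satisfies \eqref{e.angle}, which presupposes nonzeroness; equivalently, I would read the corollary's hypotheses as including $\wt\bd_{k+1}^{\MCS}\neq0$, as in the proposition. Mathematically this is the main obstacle, since the formula \eqref{e.lambda} degenerates there and one would instead use a restart such as $-\bv_{k+1}$. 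I would state this as an implicit nondegeneracy condition.

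With both inputs in place, fix $\mu_1\in(0,1-\theta)$ and apply Proposition~\ref{prop:MatCSG-armijo-final}. It supplies $t_k>0$ with the Armijo inequality. Its proof in fact gives the inequality for every $t\in(0,\bar t]$, so any such $t_k$ works, which completes the argument.
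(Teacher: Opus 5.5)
Your proposal is correct and follows the paper's proof. Like the paper, you obtain the bounded angle condition from Theorem~\ref{thm.angle-enforce} and then apply Proposition~\ref{prop:MatCSG-armijo-final} with the assumed bounds \eqref{eq:dir_error_bound}--\eqref{eq:dir_error_small}; your explicit treatment of the no-correction branch and your detection of the degenerate case $c_{k+1}=1$, where \eqref{e.dcorr} gives $\wt\bd_{k+1}^{\MCS}=0$, are more careful than the paper, which addresses neither. That degenerate case needs no extra nondegeneracy hypothesis for the corollary itself: with $\wt\bd_{k+1}^{\MCS}=0$ the claimed inequality reduces to $f(\bx_{k+1})\le f(\bx_{k+1})$, which holds for every $t_k>0$.
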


\begin{proof}
By Theorem~\ref{thm.angle-enforce}, the corrected direction
$\wt\bd_{k+1}^{\MCS}$ satisfies the bounded angle condition \eqref{e.angle}. The directional model error bound \eqref{eq:dir_error_bound} and the relative directional model-error condition \eqref{eq:dir_error_small} are precisely the remaining assumptions of
Proposition~\ref{prop:MatCSG-armijo-final}. Therefore, the claimed
Armijo-type decrease follows directly from
Proposition~\ref{prop:MatCSG-armijo-final}.
\end{proof}

The descent subgradient method of Maleknia and Soleimani--Damaneh
\cite{Maleknia2024} employs a two-point Mifflin \texttt{LiS} procedure, called \texttt{DG-TPLiS}, that either produces a serious step satisfying a sufficient decrease
condition or generates a new subgradient that enriches the approximation
of the Goldstein $\varepsilon$-subdifferential. The key feature of that
procedure is the use of two step lengths:
\begin{itemize}
\item $t_i$, which is kept inside $(0,\varepsilon)$ and is used to generate
new subgradient information;
\item $\bar t_i$, which is used to test whether a serious step satisfying
sufficient decrease can be accepted.
\end{itemize}

In the present derivative-free setting, exact subgradients are unavailable.
Therefore, they are replaced by approximate subgradients computed by the
discrete-gradient construction. Moreover, instead of using the normalized
minimal-norm subgradient direction, the search direction is generated by
the \texttt{MatCSG} formula \eqref{dkCSG} and then corrected by
\eqref{e.dcorr}--\eqref{e.lambda} to enforce a bounded angle condition.

The \texttt{DG-TPLiS} mechanism below follows the two-point structure of
\cite{Maleknia2024}. The only modification in the interval update is that
the next trial point $t_{i+1}$ is selected as the geometric mean of the two
contracted endpoints. This is a particular admissible interior choice and
does not affect the interval contraction argument.

The sufficient decrease condition used in the \texttt{DG-TPLiS} algorithm is
\[
f(\bx+t\wt\bd_k^{\MCS})-f(\bx)
\le
\mu_1 t\,\bv_k^\top \wt\bd_k^{\MCS},
\]
where $\mu_1\in(0,1)$ and
$\bv_k^\top \wt\bd_k^{\MCS}<0$. The enrichment condition is
\[
\bg_i^\top\wt\bd_k^{\MCS}
\ge
\mu_2\bv_k^\top\wt\bd_k^{\MCS},
\qquad
0<\mu_1<\mu_2<1.
\]
Since $\bv_k^\top\wt\bd_k^{\MCS}<0$, the enrichment condition identifies
a discrete gradient that is less aligned with the current descent model
than $\bv_k$ and therefore provides new information for improving the
convex approximation of the Goldstein subdifferential.

\paragraph{Normalization and practical scaling.}
\label{par:direction_normalization_scaling}
Throughout Algorithm~\ref{AlgTPLSDG}, the corrected \texttt{MatCSG}
direction is normalized before it is passed to the \texttt{DG-TPLiS}. More
precisely, after the correction step \eqref{e.dcorr}--\eqref{e.lambda}, we
use the direction
\begin{equation}\label{eq:normalized_MatCSG_direction}
\wt\bd_k^{\MCS}
:=
\frac{\tilde\bd_k^{\MCS}}{\|\tilde\bd_k^{\MCS}\|}.
\end{equation}
Since the bounded angle condition is invariant under positive scaling,
\eqref{eq:normalized_MatCSG_direction} preserves the descent orientation:
\[
\bv_k^\top \wt\bd_k^{\MCS}
\le
-\varrho\|\bv_k\|.
\]

For practical implementations, one may instead use a positively scaled
direction
\begin{equation}\label{eq:scaled_MatCSG_direction}
\widehat\bd_k^{\MCS}
:=
\alpha\,\wt\bd_k^{\MCS},
\qquad
\alpha>0,
\end{equation}
where $\alpha$ is fixed independently of $k$. This scaling does not change
the angle, since
\[
\frac{\bv_k^\top \widehat\bd_k^{\MCS}}
{\|\bv_k\|\,\|\widehat\bd_k^{\MCS}\|}
=
\frac{\bv_k^\top \wt\bd_k^{\MCS}}
{\|\bv_k\|\,\|\wt\bd_k^{\MCS}\|}.
\]
Moreover, if the \texttt{DG-TPLiS} accepts only steps satisfying
\begin{equation}\label{eq:successful_step_lower_bound}
t_k\ge\underline t>0,
\end{equation} then the accepted displacement satisfies
\[
t_k\|\widehat\bd_k^{\MCS}\|
=
\alpha t_k
\ge
\alpha\underline t.
\]
Thus, in the scaled implementation, every successful step has displacement
bounded below by the positive constant $\alpha\underline t$. Consequently,
the finite-termination arguments remain valid with $\underline\tau=\alpha\underline t$. More generally, the same conclusion holds for variable scalings $\alpha_k$
provided that
\[
0<\alpha_{\min}\le\alpha_k\le\alpha_{\max}<\infty.
\]
In the theoretical statements below, we use the normalized convention
\eqref{eq:normalized_MatCSG_direction}; the scaled version
\eqref{eq:scaled_MatCSG_direction} or its diagonal version may be used in implementation; see Subsection \ref{sec:diag}.

\begin{algorithm}[H]
\caption{Two-Point Line Search with Discrete Gradients ({\tt DG-TPLiS})}
\label{AlgTPLSDG}
\begin{algorithmic}[1]

\Require Radius $\varepsilon\in(0,1)$, current point $\bx_k\in\R^n$, base vector
$\bv_k\in\Rz^n$, corrected search direction $\wt\bd_k^{\MCS}$ satisfying
the normalization convention \eqref{eq:normalized_MatCSG_direction},
parameters $\mu_1,\mu_2$ satisfying $0<\mu_1<\mu_2<1$,
contraction parameter $\zeta\in(0,\tfrac12)$,
lower successful-step bound $\underline t\in(0,\varepsilon)$,
and integer $p\in\mathbb N$.

\Ensure A pair $(\mathfrak{s}_k,I_k)$, where $I_k=1$ denotes a successful step and
$I_k=0$ denotes a valid enrichment discrete gradient.

\State Choose $t_0\in(\underline t,\varepsilon)$ and $\bd^{(1)}\in D$. Then, set
$\lambda_0:=\lambda_k$ and $z_0:=z_k$.

\State Compute $i_{\max}\in \D\argmax_{j=1,\dots,n}|d^{(1)}_j|$
and the discrete gradient 
\[\bg_0
:=
\bg_{i_{\max}}
(\bx_k+t_0\wt\bd_k^{\MCS},\bd^{(1)},\ve,z_0,\lambda_0,\omega).
\]

\State Set $\bar t_0:=1$, $t_0^l:=0$, $t_0^u:=\varepsilon$, and $i:=0$.

\While{true}

\If{$f(\bx_k+t_i\wt\bd_k^{\MCS})-f(\bx_k)
\le
\mu_1 t_i\,\bv_k^\top\wt\bd_k^{\MCS}$}, $t_{i+1}^l:=t_i$, \quad $t_{i+1}^u:=t_i^u$.

\Else, $t_{i+1}^l:=t_i^l$, \quad $t_{i+1}^u:=t_i$.

\EndIf

\If{$f(\bx_k+\bar t_i\wt\bd_k^{\MCS})-f(\bx_k)
\le
\mu_1 \bar t_i\,\bv_k^\top\wt\bd_k^{\MCS}$
\textbf{and} $\bar t_i\ge\underline t$}

\State Set $\mathfrak{s}_k:=\bx_k+\bar t_i\wt\bd_k^{\MCS}$, $I:=1$, and \Return $(\mathfrak{s}_k,I_k)$.

\EndIf

\If{$\bg_i^\top\wt\bd_k^{\MCS}
\ge
\mu_2\bv_k^\top\wt\bd_k^{\MCS}$}

\State Set $\mathfrak{s}_k:=\bg_i$, $I:=0$, and \Return $(\mathfrak{s}_k,I_k)$.

\EndIf

\State Choose
\[
t_{i+1}
=
\sqrt{
\bigl(t_{i+1}^l+\zeta(t_{i+1}^u-t_{i+1}^l)\bigr)
\bigl(t_{i+1}^u-\zeta(t_{i+1}^u-t_{i+1}^l)\bigr)
}.
\]

\State Select direction $\bd^{(i+1)}\in D$ and update
\[
\bar t_{i+1}:=
\left[\exp\left(\frac{\log t_0}{p}\right)\right]^{i+1}.
\]

\State Set $\lambda_{i+1}:=\lambda_k$, and $z_{i+1}:=z_k$. Then, compute $i_{\max}\in \D\argmax_{j=1,\dots,n}|d^{(i+1)}_j|$
\State and the new discrete gradient
\[
\bg_{i+1}
:=
\bg_{i_{\max}}
(\bx_k+t_{i+1}\wt\bd_k^{\MCS},\bd^{(i+1)},\ve,z_{i+1},\lambda_{i+1},\omega).
\]

\State $i:=i+1$.

\EndWhile

\end{algorithmic}
\end{algorithm}

The procedure returns a pair $(\mathfrak{s},I)$. If $I=1$, then
$\mathfrak{s}$ is the new point $\mathfrak{s}=\bx_k+\bar t_i\wt\bd_k^{\MCS}$,  and it satisfies both the sufficient decrease condition and the lower
successful-step condition $\bar t_i\ge\underline t$. By the normalization convention \eqref{eq:normalized_MatCSG_direction},
every successful step satisfies \eqref{eq:successful_step_lower_bound}. Thus, the positive lower displacement used later in the finite-termination
analysis becomes an algorithmic property.

If $I=0$, then $\mathfrak{s}$ is the discrete gradient $\bg_i$ computed at
the trial point $\bx_k+t_i\wt\bd_k^{\MCS}$ using the sampling direction
$\bd^{(i)}$. In this case, $\bg_i$ is added to the current set
$\mathcal V(\bx_k)$ in order to enrich the approximation of the Goldstein
$\varepsilon$-subdifferential.

The parameter $\varepsilon$ controls the interval used to generate
discrete-gradient information through the sequence $\{t_i\}$. The auxiliary
sequence $\{\bar t_i\}$ is used for the serious-step acceptance test. Since
$\varepsilon\in(0,1)$ and $t_0\in(\underline t,\varepsilon)$, the sequence
\[
\bar t_{i+1}
=
\left[\exp\left(\frac{\log t_0}{p}\right)\right]^{i+1}
\]
is well defined and satisfies $\bar t_p=t_0$. Since $0<t_0<1$, the auxiliary sequence may be written as $\bar t_i=t_0^{i/p}$, and in particular $\bar t_p=t_0$.

The next results establish finite termination of the two-point procedure.
Unlike Proposition~\ref{prop:MatCSG-armijo-final}, they do not use the
directional model error assumption (A5); instead, they rely on the
interval-contraction mechanism and the directional consistency condition
(A4).

\begin{lem}[Interval contraction and limit properties]
\label{lem_dgtpls}
Assume that (A1) holds and suppose that Algorithm~\ref{AlgTPLSDG}
is applied with $\zeta\in(0,\tfrac12)$ and $\varepsilon\in(0,1)$. Assume that the algorithm does not terminate. Define $\Delta_i:=t_i^u-t_i^l$. Then the following assertions hold.

\begin{enumerate}
\item[(i)] For every $i\ge0$, one has $t_i\in\{t_{i+1}^l,t_{i+1}^u\}$. Moreover, for every $i\ge1$,
\[
0<\Delta_{i+1}\le(1-\zeta)\Delta_i \quad \text{and}\quad 0\le t_i^l\le t_{i+1}^l<t_{i+1}^u\le t_i^u\le\varepsilon.
\]

\item[(ii)] There exists $t^\ast\in[0,\varepsilon]$ such that $t_i^u\downarrow t^\ast$, $t_i^l\uparrow t^\ast$, and $t_i\to t^\ast$. Furthermore,
\[
t^\ast\in
T:=
\left\{
t\ge0:
f(\bx_k+t\wt\bd_k^{\MCS})-f(\bx_k)
\le
\mu_1 t\,\bv_k^\top\wt\bd_k^{\MCS}
\right\}.
\]

\item[(iii)] The index set $\mathcal I
:=
\{\,i\in\mathbb N_0:t_{i+1}^u=t_i\,\}$
is infinite.
\end{enumerate}
\end{lem}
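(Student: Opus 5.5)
The plan is to treat (i) as a purely arithmetic statement about the update rule, and then to obtain (ii) and (iii) from monotonicity, compactness, and the closedness of $T$. Closedness of $T$ follows from (A1), since $f$ is continuous and the right-hand side $\mu_1 t\,\bv_k^\top\wt\bd_k^{\MCS}$ is continuous in $t$.

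For (i), write $l:=t_{i+1}^l$, $u:=t_{i+1}^u$ and $\Delta:=u-l$, and set $a:=l+\zeta\Delta$, $b:=u-\zeta\Delta$. The first thing to check is that the trial points stay strictly inside the current interval.
\begin{itemize}
\item The relation $t_i\in\{t_{i+1}^l,t_{i+1}^u\}$ is immediate from the two branches of the first \textbf{if}.
\item Since $\zeta<\tfrac12$, we have $a<b$. Whenever $\Delta>0$ we also have $a\ge\zeta\Delta>0$.
\item A geometric mean of two positive numbers lies between them, so $t_{i+1}=\sqrt{ab}\in[a,b]$. Hence $t_{i+1}-l\ge\zeta\Delta>0$ and $u-t_{i+1}\ge\zeta\Delta>0$.
\end{itemize}
The next interval is either $[l,t_{i+1}]$ or $[t_{i+1},u]$, so its length is at most $(1-\zeta)\Delta$ and still strictly positive. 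The nesting $t_i^l\le t_{i+1}^l<t_{i+1}^u\le t_i^u\le\varepsilon$ then follows by induction. The base case uses $t_0\in(\underline t,\varepsilon)\subset(0,\varepsilon)$, which gives $0=t_0^l<t_0<t_0^u=\varepsilon$, so $t_1^u-t_1^l>0$. This is why the contraction factor is only claimed for $i\ge1$: the first interval is split at the arbitrary point $t_0$ rather than by the $\zeta$-rule.

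For (ii), $\{t_i^l\}$ is nondecreasing and $\{t_i^u\}$ is nonincreasing, both lie in $[0,\varepsilon]$, and $\Delta_i\to0$ geometrically by (i). Hence both sequences converge to a common limit $t^\ast$. Each $t_{i+1}$ lies in $[t_{i+1}^l,t_{i+1}^u]$, so $t_i\to t^\ast$ by squeezing. To show $t^\ast\in T$, observe that every value taken by $t_i^l$ is either $0$ or some earlier $t_j$ that passed the sufficient-decrease test. In both cases that value lies in $T$; for $0$ the inequality reads $0\le0$. Since $t_i^l\to t^\ast$ and $T$ is closed, $t^\ast\in T$.

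For (iii), argue by contradiction: suppose $\mathcal I$ is finite. Then from some index on, only lower updates occur, and $t_i^u\equiv u$ is constant, so $t^\ast=u$. There are two cases.
\begin{itemize}
\item If $u=t_j$ for some $j\in\mathcal I$, then $t_j$ failed the sufficient-decrease test, so $t_j\notin T$. This contradicts $t^\ast=t_j\in T$ from (ii).
\item The remaining case is $u=\varepsilon$ with no upper update ever having occurred, so that every $t_i\in T$ and $t_i\uparrow\varepsilon$.
\end{itemize}
I expect this second case to be the main obstacle, because closedness of $T$ yields no contradiction there. What I would try first is to exclude it using the non-termination hypothesis. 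In particular, I would check whether the serious-step test with $\bar t_i=t_0^{i/p}$, and the interplay between the choices $t_0<\varepsilon<1$ and $\underline t$, forces termination. If that fails, I would make explicit an additional convention, for instance that $t_0^u=\varepsilon$ is counted as a rejected trial, or equivalently that $\varepsilon\notin T$ is assumed. Without something of this kind, (iii) does not seem to follow from (A1) alone.
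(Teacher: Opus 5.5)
Your parts (i) and (ii) are correct and match the paper's argument. You are slightly more careful than the paper in checking $a\ge\zeta\Delta>0$, which guarantees that the geometric mean $\sqrt{ab}$ lies in $[a,b]$. The gap is in (iii), in exactly the case you flagged: $\mathcal I=\emptyset$, i.e., the upper endpoint is never updated. You leave this case open and suggest that (iii) may need an extra convention such as $\varepsilon\notin T$. It does not. The non-termination hypothesis already rules this case out, through the mechanism you pointed at but did not carry out.

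If $\mathcal I=\emptyset$, then in particular the sufficient-decrease test at $i=0$ succeeds, so $t_0\in T$. The auxiliary steps are $\bar t_i=t_0^{i/p}$, so $\bar t_p=t_0$, and $t_0>\underline t$ by the choice in Step 1. At inner iteration $i=p$, the serious-step test therefore re-tests exactly the point $t_0$. Both of its conditions hold: $f(\bx_k+\bar t_p\wt\bd_k^{\MCS})-f(\bx_k)\le\mu_1\bar t_p\,\bv_k^\top\wt\bd_k^{\MCS}$ and $\bar t_p\ge\underline t$. So the algorithm returns with $I=1$, contradicting non-termination. Equivalently, non-termination forces $t_0\notin T$, hence $0\in\mathcal I$ and $t_1^u=t_0$. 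Your scenario with $t_i\uparrow\varepsilon$ and all $t_i\in T$ therefore cannot occur.

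This is how the paper proceeds. It first proves $\mathcal I\neq\emptyset$ by this argument, and then treats the finite nonempty case via $j:=\max\mathcal I$, which is your first case. Once you insert this step, your proof is complete. Without it, (iii) is not established. Your fallback of adding $\varepsilon\notin T$ as a hypothesis would only prove a weaker lemma than the one stated.
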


\begin{proof}
\textbf{(i)}
The update rule in Algorithm~\ref{AlgTPLSDG} gives either
\[
t_{i+1}^l=t_i,\qquad t_{i+1}^u=t_i^u, \quad \text{or} \quad
t_{i+1}^l=t_i^l,\qquad t_{i+1}^u=t_i.
\]
Hence $t_i\in\{t_{i+1}^l,t_{i+1}^u\}$ and $[t_{i+1}^l,t_{i+1}^u]\subseteq[t_i^l,t_i^u]$. For $i\ge1$, the point $t_i$ was generated by the geometric-mean rule
from the contracted interval
\[
\left[
t_i^l+\zeta(t_i^u-t_i^l),
\;
t_i^u-\zeta(t_i^u-t_i^l)
\right].
\]
Thus, $t_i-t_i^l\ge\zeta\Delta_i$ and $
t_i^u-t_i\ge\zeta\Delta_i$. If the lower endpoint is updated, then
\[
\Delta_{i+1}=t_i^u-t_i\le(1-\zeta)\Delta_i.
\]
If the upper endpoint is updated, then $\Delta_{i+1}=t_i-t_i^l\le(1-\zeta)\Delta_i$. Hence, $0<\Delta_{i+1}\le(1-\zeta)\Delta_i$ for $i\ge1$. The nesting inequalities follow directly from the update rule and the fact
that the algorithm does not terminate.

\medskip
\noindent
\textbf{(ii)}
Since $\Delta_{i+1}\le(1-\zeta)\Delta_i$ for all $i\ge1$ and
$0<1-\zeta<1$, we have $\Delta_i\to0$. The sequence $\{t_i^l\}$ is nondecreasing and bounded above by
$\varepsilon$, while $\{t_i^u\}$ is nonincreasing and bounded below by
$0$. Hence there exist $t_l^\ast,t_u^\ast\in[0,\varepsilon]$ such that $t_i^l\uparrow t_l^\ast$ and $t_i^u\downarrow t_u^\ast$. Because $\Delta_i=t_i^u-t_i^l\to0$, we have $t_l^\ast=t_u^\ast=:t^\ast$. Since $t_i\in[t_i^l,t_i^u]$, it follows that $t_i\to t^\ast$.

We now prove that $t^\ast\in T$. We first show that $t_i^l\in T$ for all
$i\ge0$. Since $t_0^l=0$, we have $t_0^l\in T$. Suppose $t_i^l\in T$.
If the sufficient decrease condition at $t_i$ holds, then
$t_{i+1}^l=t_i$, so $t_{i+1}^l\in T$. Otherwise,
$t_{i+1}^l=t_i^l$, and again $t_{i+1}^l\in T$. Hence $t_i^l\in T$ for
all $i$. Since $f$ is locally Lipschitz, the function $t\mapsto f(\bx_k+t\wt\bd_k^{\MCS})$ is continuous. Passing to the limit along $t_i^l\uparrow t^\ast$ gives
\[
f(\bx_k+t^\ast\wt\bd_k^{\MCS})-f(\bx_k)
\le
\mu_1 t^\ast\bv_k^\top\wt\bd_k^{\MCS}.
\]
Thus $t^\ast\in T$.

\medskip
\noindent
\textbf{(iii)}
Let $\mathcal I:=\{\,i\in\mathbb N_0:t_{i+1}^u=t_i\,\}$. We first prove that $\mathcal I\neq\emptyset$. Suppose, by contradiction,
that $\mathcal I=\emptyset$. Then the upper endpoint is never updated,
and therefore the sufficient decrease condition at $t_i$ holds for all
$i\ge0$. In particular,
\[
f(\bx_k+t_0\wt\bd_k^{\MCS})-f(\bx_k)
\le
\mu_1 t_0\bv_k^\top\wt\bd_k^{\MCS}.
\]
By the definition of the auxiliary sequence,
\[
\bar t_p
=
\left[\exp\left(\frac{\log t_0}{p}\right)\right]^p
=
t_0.
\]
Since $t_0>\underline t$, the successful-step test is satisfied at
iteration $p$, which contradicts the assumption that the algorithm does
not terminate. Hence $\mathcal I\neq\emptyset$.

Now suppose that $\mathcal I$ is finite. Since upper endpoint updates occur
only finitely many times, there exists $\bar i$ such that $t_i^u$ is
constant for all $i\ge\bar i$. By part (ii), $t_i^u\downarrow t^\ast$; therefore this constant must be $t^\ast$, and hence $t_i^u=t^\ast$, for all $i\ge\bar i$. Because $\mathcal I\neq\emptyset$ and is finite, let $j:=\max\mathcal I$. Then, by the definition of $\mathcal I$, $t_{j+1}^u=t_j$. Since no upper endpoint updates occur after index $j$, the sequence
$\{t_i^u\}$ is constant for all $i\ge j+1$. By part (ii), we also have $t_i^u\downarrow t^\ast$. Therefore, this constant value must be $t^\ast$, and hence $t_j=t_{j+1}^u=t^\ast$.
Moreover, because the upper endpoint was updated at index $j$, the
sufficient decrease condition failed at $t_j$, namely,
\[
f(\bx_k+t_j\wt\bd_k^{\MCS})-f(\bx_k)
>
\mu_1t_j\bv_k^\top\wt\bd_k^{\MCS}.
\]
Since $t_j=t^\ast$, this contradicts $t^\ast\in T$. Therefore
$\mathcal I$ must be infinite.
\end{proof}

The previous lemma is purely algorithmic: it describes the contraction of
the trial interval and the limiting behavior of the two-point \texttt{DG-TPLiS}
sequence under the assumption of nontermination. We now use this structure,
together with the directional consistency condition in (A4), to prove finite
termination of Algorithm~\ref{AlgTPLSDG}. Notice that the Armijo
admissibility result in Proposition~\ref{prop:MatCSG-armijo-final} is not
used in this proof. That proposition gives a sufficient local condition for
existence of an Armijo step under the directional model error assumption
(A5), whereas the finite termination result below relies instead on the
enrichment mechanism and the weak upper semismoothness/directional
consistency condition (A4).

\begin{thm}[Finite termination of {\tt DG-TPLiS}]
\label{thm_dgtpls_final_clean}
Suppose that (A1) and (A4) hold.
Assume that Algorithm~\ref{AlgTPLSDG} is applied with
\[
\varepsilon\in(0,1),
\qquad
\underline t\in(0,\varepsilon),
\qquad
p\in\mathbb N,
\qquad
0<\mu_1<\mu_2<1.
\]
Let $\bd:=\wt\bd_k^{\MCS}$, $
q:=\bv_k^\top\bd<0$, $I=I_k$, $\mathfrak{s}=\mathfrak{s}_k$, $\bx=\bx_k$, and $\bv=\bv_k$ Then, Algorithm~\ref{AlgTPLSDG} terminates after finitely many inner iterations. More precisely, it returns either
\[
I=1 \quad \text{and}
\quad
\mathfrak{s}=\bx+\bar t_i\bd \quad \text{with}\quad f(\bx+\bar t_i\bd)-f(\bx)
\le
\mu_1\bar t_i\bv^\top\bd,
\qquad
\bar t_i\ge\underline t,
\]
or it returns $I=0$ and $\mathfrak{s}=\bg_i$ with $\bg_i^\top\bd
\ge
\mu_2\bv^\top\bd$.
\end{thm}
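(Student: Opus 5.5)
The argument is by contradiction. Assume Algorithm~\ref{AlgTPLSDG} never terminates and invoke Lemma~\ref{lem_dgtpls}. It provides a limit $t^\ast\in[0,\varepsilon]$ with $t_i\to t^\ast$ and $t^\ast\in T$, together with an infinite index set $\mathcal I$ on which the sufficient decrease test fails at $t_i$. If the loop runs forever, both return tests fail at every $i$. In particular, the enrichment test fails, so
\[
\bg_i^\top\bd<\mu_2 q\qquad\text{for all } i .
\]
The plan is to derive a contradiction from (A4) applied along the ray $t\mapsto \bx+t\bd$ based at the point $z:=\bx+t^\ast\bd$.

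First, use the structure of $\mathcal I$ to get increments that are too large. For $i\in\mathcal I$ we have $t_i=t_{i+1}^u\ge t^\ast$ (upper endpoints decrease to $t^\ast$). If $t_i=t^\ast$, failure of the test at $t_i$ would contradict $t^\ast\in T$, so in fact $t_i>t^\ast$. Set $h_i:=t_i-t^\ast\downarrow0$ along $\mathcal I$. Failure of sufficient decrease at $t_i$ reads
\[
f(\bx+t_i\bd)-f(\bx)>\mu_1 t_i q .
\]
Membership $t^\ast\in T$ gives $f(\bx+t^\ast\bd)-f(\bx)\le\mu_1 t^\ast q$. Subtracting these two inequalities yields
\[
\frac{f(z+h_i\bd)-f(z)}{h_i}>\mu_1 q\qquad (i\in\mathcal I),
\]
and hence $\liminf_{i\in\mathcal I}\bigl(f(z+h_i\bd)-f(z)\bigr)/h_i\ge\mu_1 q$.

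Next, apply the consistency condition. The discrete gradients $\bg_i$ are computed at the points $\bx+t_i\bd=z+h_i\bd$, with the fixed parameters $\lambda_k,z_k$. Condition \eqref{ass:wus_dg_lis} of (A4), applied with $J=\mathcal I$, therefore gives
\[
\limsup_{i\in\mathcal I}\bg_i^\top\bd\ge\mu_1 q .
\]
On the other hand, nontermination forces $\bg_i^\top\bd<\mu_2 q$ for all $i$, so $\limsup_{i\in\mathcal I}\bg_i^\top\bd\le\mu_2 q$. Since $q<0$ and $\mu_1<\mu_2$, we have $\mu_2 q<\mu_1 q$, which contradicts the previous display. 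Hence the loop stops after finitely many inner iterations.

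It remains to describe the outputs. These follow directly from the two return statements: a return with $I=1$ occurs only when the sufficient decrease test holds at $\bar t_i$ and $\bar t_i\ge\underline t$, and a return with $I=0$ occurs only when $\bg_i^\top\bd\ge\mu_2\bv^\top\bd$.

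The main obstacle is a technical mismatch in (A4): it is stated for a base point $z$, a fixed direction, and a sequence $h_i\downarrow0$, so we must check that it applies with $z=\bx+t^\ast\bd$. That step is exactly where the strictness $t_i>t^\ast$ on $\mathcal I$ is needed. It also needs the fact that the sampling directions $\bd^{(i)}$ change with $i$, which (A4) tolerates because it concerns arbitrary discrete gradients generated at those points. A minor point is that $\mathcal I$ should be handled as an infinite subsequence along which $h_i$ is monotone; if necessary, pass to a further subsequence to ensure $h_i\downarrow0$.
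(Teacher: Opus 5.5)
Your proof is correct and follows the paper's argument. The common steps are: contradiction via Lemma~\ref{lem_dgtpls}, failed sufficient decrease on $\mathcal I$ combined with $t^\ast\in T$ to bound the difference quotients at $z=\bx+t^\ast\bd$, (A4) with $J=\mathcal I$, and the clash with $\mu_2 q<\mu_1 q$. Your justification of $t_i>t^\ast$ on $\mathcal I$ — excluding $t_i=t^\ast$ because the test fails at $t_i$ while $t^\ast\in T$ — is in fact a bit more explicit than the paper's bare assertion that $t_{i+1}^u>t^\ast$.
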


\begin{proof}
Assume, by contradiction, that Algorithm~\ref{AlgTPLSDG} does not
terminate. Let $\mathcal I$ be the infinite index set from
Lemma~\ref{lem_dgtpls}(iii). For every $i\in\mathcal I$, the upper
endpoint is updated, and hence the sufficient decrease condition at
$t_i$ fails:
\[
f(\bx+t_i\bd)-f(\bx)
>
\mu_1t_i\bv^\top\bd.
\]
By Lemma~\ref{lem_dgtpls}(ii), $t_i\to t^\ast$ and $t^\ast\in T$, so $f(\bx_k+t^\ast\bd)-f(\bx_k)
\le
\mu_1t^\ast\bv^\top\bd$. Combining the last two inequalities gives, for all $i\in\mathcal I$,
\[
f(\bx+t_i\bd)-f(\bx+t^\ast\bd)
>
\mu_1(t_i-t^\ast)\bv^\top\bd.
\]
Since $i\in\mathcal I$ implies $t_i=t_{i+1}^u$, and since
$t_{i+1}^u>t^\ast$ for every finite nonterminal index, we have $t_i>t^\ast$. Define $h_i:=t_i-t^\ast>0$ and $z:=\bx+t^\ast\bd$. Then $h_i\downarrow0$ along $\mathcal I$, and
\[
\frac{f(z+h_i\bd)-f(z)}{h_i}
>
\mu_1\bv^\top\bd
=
\mu_1 q,
\qquad
i\in\mathcal I.
\]
Therefore, $\D\liminf_{i\in\mathcal I,\ i\to\infty}
\frac{f(z+h_i\bd)-f(z)}{h_i}
\ge
\mu_1 q$. By the directional consistency condition \eqref{ass:wus_dg_lis} in (A4), 
\begin{equation}\label{eq:dgtplis_limsup_gtd_bound}
\limsup_{i\in\mathcal I,\ i\to\infty}\bg_i^\top\bd
\ge
\mu_1 q.
\end{equation}
On the other hand, since the algorithm does not terminate through the
enrichment condition, we have $\bg_i^\top\bd
<
\mu_2\bv^\top\bd
=
\mu_2q$ for all $i$. Hence, $\displaystyle\limsup_{i\in\mathcal I,\ i\to\infty}\bg_i^\top\bd
\le
\mu_2q$. Since $q<0$ and $0<\mu_1<\mu_2<1$, we have $\mu_2q<\mu_1q$. Thus,
\[
\mu_1q
\overset{\eqref{eq:dgtplis_limsup_gtd_bound}}{\le}
\limsup_{i\in\mathcal I,\ i\to\infty}\bg_i^\top\bd
\le
\mu_2q
<
\mu_1q,
\]
which is impossible. Therefore, Algorithm~\ref{AlgTPLSDG} must terminate
after finitely many inner iterations.
\end{proof}

Theorem~\ref{thm_dgtpls_final_clean} establishes finite termination of the
inner \texttt{DG-TPLiS} procedure. Finite termination of the outer
{\tt DG-MatCSG} loop requires an additional argument controlling the decrease
of the minimal-norm element of the convex hull after enrichment.

\section{Computation of an Approximate Goldstein Stationary Point}
\label{sec_stationarity_algorithm}

In this section, we adapt the descent subgradient framework of
Maleknia and Soleimani--Damaneh \cite{Maleknia2024} to the discrete
gradient setting. The objective is to compute a
$(\delta,\mathcal V(\bx_k))$ approximate Goldstein stationary point in the
sense of \eqref{eq:goldstein_stationarity_dg} using only function
evaluations.

At each iteration $k$, we maintain a finite set $\mathcal V(\bx_k) \subset \mathbb{R}^n$, consisting of discrete gradients computed at $\bx_k$ and along trial
points generated by the \texttt{DG-TPLiS} procedure.

We distinguish between:
\begin{itemize}
\item the \bfi{base discrete gradient} $\hat{\bv}_k \in \mathcal V(\bx_k)$ used in the \texttt{LiS};
\item the \bfi{minimal-norm element} $\bv_k$ defined by \eqref{eq:prob_vk_goldstein}.
\end{itemize}
By construction, $\|\bv_k\| \le \|\hat{\bv}_k\|$. The complete version of the method is summarized in
Algorithm~\ref{alg:DG-MatCSG}. Starting from an initial point $\bx_0$,
a discrete gradient $\hat{\bv}_0$ is computed and used to initialize the set
$\mathcal V(\bx_0)$, and we set $\mathcal V(\bx_0):=\{\hat{\bv}_0\}$. At iteration $k$, we compute $\bv_k$ by \eqref{eq:prob_vk_goldstein}. If
$\|\bv_k\| \le \delta$, then, by Definition \ref{eq:goldstein_stationarity_dg}, the point
$\bx_k$ is a $(\delta,\mathcal V(\bx_k))$ approximate Goldstein stationary point, and the procedure terminates.

Moreover, under the approximation properties
\eqref{eq:outer_dg}--\eqref{eq:approx_g}, Proposition~\ref{prop:DG_to_Goldstein}
implies that $\bx_k$ is also an approximate Goldstein stationary point
with respect to $\partial_\varepsilon f(\bx_k)$.

Otherwise, a search direction is computed using the {\tt MatCSG}
construction \eqref{dkCSG}. Since the \texttt{LiS} uses the scalar product
$\hat{\bv}_k^\top \bd_k$, the correction step
\eqref{e.dcorr}--\eqref{e.lambda} is applied with respect to the current
base vector $\hat{\bv}_k$ in order to ensure descent with respect to
$\hat{\bv}_k$.

A modified two-point {\tt LiS} procedure, {\tt DG-TPLiS}, is
then applied along $\bd_k$ using $\hat{\bv}_k$. This procedure either
produces a trial point satisfying a sufficient decrease condition or returns an additional discrete gradient that enriches the set
$\mathcal V(\bx_k)$.

If {\tt DG-TPLiS} produces a new point $\mathfrak{s}_k$ $(I_k=1)$, we set $\bx_{k+1} = \mathfrak{s}_k$. A new base discrete gradient $\hat{\bv}_{k+1}$ is computed at
$\bx_{k+1}$, and the new set is initialized as $\mathcal V(\bx_{k+1}) = \{\hat{\bv}_{k+1}\}$. The minimal-norm vector $\bv_{k+1}$ is then computed from this set at the
next iteration.

Otherwise $(I_k=0)$, the current point remains unchanged and the returned
vector is added to the set $\mathcal V(\bx_k)$, i.e., $\bx_{k+1} = \bx_k$, $\mathcal V(\bx_{k+1})
=
\mathcal V(\bx_k)\cup\{\mathfrak{s}_k\}$, and $\hat{\bv}_{k+1} := \mathfrak{s}_k$. The iteration counter is then increased and the procedure repeats.

During enrichment iterations $(I_k=0)$, the base point remains unchanged
and the search direction is not recomputed. Instead, the current direction
is reused while the set $\mathcal V(\bx_k)$ is enriched by additional
discrete gradients. The reused direction is assumed to preserve the bounded angle condition
with respect to all enrichment vectors generated while the base point
remains unchanged.

The enrichment vectors returned by Algorithm~\ref{AlgTPLSDG}
are interpreted as approximate subgradients associated with the current
base point through the local approximation properties of the discrete
gradient construction. More precisely, they are computed at trial points
associated with the fixed base point and the current search direction.

In Algorithm~\ref{AlgTPLSDG}, the symbol $\bv_k$ denotes the base vector
passed to \texttt{DG-TPLiS}. When the procedure is invoked inside
Algorithm~\ref{alg:DG-MatCSG}, this base vector is instantiated as
$\hat{\bv}_k$, not necessarily as the minimal-norm element of
$\conv(\mathcal V(\bx_k))$.

\begin{algorithm}[H]
\caption{Discrete-Gradient Descent Method with {\tt MatCSG} Direction ({\tt DG-MatCSG})}
\label{alg:DG-MatCSG}

\textbf{Input:}
Initial point $\bx_0$, tolerance $\delta>0$, angle condition parameter $\varrho\in(0,1)$.\\
\textbf{Output:} A $(\delta,\mathcal V(\bx))$ approximate Goldstein stationary point

\textbf{Initialization:}
Compute an initial discrete gradient $\hat{\bv}_0$ at $\bx_0$ and set $\mathcal V(\bx_0) = \{\hat{\bv}_0\}$, and $k=0$.

\begin{algorithmic}[1]

\While{true}

\State Compute $\bv_k$ by \eqref{eq:prob_vk_goldstein}.

\State {\bf if} $\|\bv_k\| \le \delta$ {\bf then}, \Return $\bx_k$; {\bf end if}

\If{$k=0$ or $I_{k-1}=1$}
\State Compute the raw \texttt{MatCSG} direction $\bd_k^{\MCS}$ by
\eqref{dkCSG}.
\State Apply the correction \eqref{e.dcorr}--\eqref{e.lambda} with respect
to $\hat{\bv}_k$.
\State Then, normalize it according to
\eqref{eq:normalized_MatCSG_direction} as $\|\bd_k\|=1$ and
$\hat{\bv}_k^\top\bd_k\le-\varrho\|\hat{\bv}_k\|$.
\Else
\State Set $\bd_k:=\bd_{k-1}$.
\EndIf

\State Apply Algorithm~\ref{AlgTPLSDG} with base vector $\hat{\bv}_k$
to obtain $(\mathfrak{s}_k,I_k)$.

\If{$I_k = 1$}

\State Set $\bx_{k+1} = \mathfrak{s}_k$, compute $\hat{\bv}_{k+1}$ at $\bx_{k+1}$, and set $\mathcal V(\bx_{k+1}) = \{\hat{\bv}_{k+1}\}$.

\Else

\State Set $\bx_{k+1} = \bx_k$, $\mathcal V(\bx_{k+1}) = \mathcal V(\bx_k)\cup\{\mathfrak{s}_k\}$, and $\hat{\bv}_{k+1} := \mathfrak{s}_k$.

\EndIf

\State $k \leftarrow k+1$.

\EndWhile

\end{algorithmic}
\end{algorithm}

Because Algorithm~\ref{AlgTPLSDG} is used with a positive lower
successful-step bound, every successful step satisfies \eqref{eq:successful_step_lower_bound}, i.e., $t_k=\bar t_i\ge \underline t>0$. Moreover, by the normalization convention
\eqref{eq:normalized_MatCSG_direction}, the \texttt{LiS} direction used in
Algorithm~\ref{alg:DG-MatCSG} satisfies $\|\bd_k\|=1$. Hence, the algorithmic lower displacement property becomes $t_k\|\bd_k\|\ge \underline t$. Thus, no separate lower-displacement assumption is needed. The following lemma is the discrete-gradient analogue of the successful-step finiteness
argument used in the descent subgradient framework.

\begin{lem}[Finiteness of successful iterations in DG--MatCSG]
\label{lem:A_finite_matcsg_strong}
Assume that (A1) and (A3) hold. Suppose that the \texttt{LiS} directions
satisfy the normalization convention \eqref{eq:normalized_MatCSG_direction}, i.e., $\|\bd_k\|=1$ holds and that every successful step returned by Algorithm~\ref{AlgTPLSDG}
satisfies $t_k\ge \underline t>0$. Assume that the bounded angle condition holds at every successful
iteration, namely,
\[
\hat{\bv}_k^\top \bd_k
\le
-\varrho \|\hat{\bv}_k\|\,\|\bd_k\|,
\qquad
\varrho\in(0,1).
\]
Let $A:=\{k\in\mathbb N_0:I_k=1\}$. If Algorithm~\ref{alg:DG-MatCSG} does not terminate, then $A$ is finite.
\end{lem}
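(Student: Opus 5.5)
The plan is a standard sufficient-decrease telescoping argument combined with a uniform lower bound on the predicted decrease at successful iterations. Suppose Algorithm~\ref{alg:DG-MatCSG} does not terminate and, by contradiction, that $A$ is infinite. At every $k\in A$, Theorem~\ref{thm_dgtpls_final_clean} (with base vector $\hat{\bv}_k$) gives
\[
f(\bx_{k+1})-f(\bx_k)\le \mu_1 t_k\,\hat{\bv}_k^\top\bd_k .
\]
Combining this with the bounded angle condition, $\|\bd_k\|=1$ and $t_k\ge\underline t$ yields
\[
f(\bx_{k+1})\le f(\bx_k)-\mu_1\varrho\,\underline t\,\|\hat{\bv}_k\|\qquad (k\in A).
\]
For $k\notin A$ we have $\bx_{k+1}=\bx_k$. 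Hence $\{f(\bx_k)\}$ is nonincreasing and all iterates stay in $\mathcal L(\bx_0)$.

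The next step is a lower bound on $\|\hat{\bv}_k\|$ at successful iterations. Nontermination means that $\|\bv_k\|>\delta$ for every $k$. Since $\bv_k$ is the minimal-norm element of $\conv(\mathcal V(\bx_k))$ and $\hat{\bv}_k\in\mathcal V(\bx_k)$, we get $\|\hat{\bv}_k\|\ge\|\bv_k\|>\delta$. Therefore every $k\in A$ produces a decrease of at least $\mu_1\varrho\,\underline t\,\delta>0$.

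To conclude, I need $f$ bounded below along the iterates. By (A3), $\mathcal L(\bx_0)$ is bounded. It is also closed, because $f$ is continuous under (A1), so it is compact and $f$ attains a finite minimum $f_{\min}$ on it. Summing the decreases over the first $N$ elements of $A$ gives
\[
f_{\min}\le f(\bx_0)-N\mu_1\varrho\,\underline t\,\delta .
\]
Letting $N\to\infty$ gives a contradiction, so $A$ is finite.

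The main point needing care is making sure the decrease inequality is applied to the right base vector. The line search is invoked with $\hat{\bv}_k$, so the Armijo test and the angle condition both involve $\hat{\bv}_k$, not the minimal-norm $\bv_k$. The bridge $\|\hat{\bv}_k\|\ge\|\bv_k\|>\delta$ is exactly where the nontermination hypothesis enters. I would also check that at successful iterations $\hat{\bv}_k$ is indeed the base vector satisfying the stated angle condition; this holds because the direction is recomputed and corrected with respect to $\hat{\bv}_k$ whenever $k=0$ or $I_{k-1}=1$, and otherwise it is reused under the stated standing hypothesis.
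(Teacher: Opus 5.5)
Your proposal is correct and follows the paper's proof essentially step for step. The Armijo test, the bounded angle condition, $\|\bd_k\|=1$ and $t_k\ge\underline t$ give a decrease of at least $\mu_1\varrho\,\underline t\,\|\hat{\bv}_k\|$; the comparison $\|\hat{\bv}_k\|\ge\|\bv_k\|>\delta$ makes this decrease uniform; and compactness of $\mathcal L(\bx_0)$ gives the contradiction. One small refinement: the decrease inequality at $k\in A$ follows directly from the acceptance test in Algorithm~\ref{AlgTPLSDG}, so you need not cite Theorem~\ref{thm_dgtpls_final_clean}, which assumes (A4) and (A4) is not among the lemma's hypotheses.
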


\begin{proof}
Assume by contradiction that $A$ is infinite. Since the algorithm does not
terminate, the stopping criterion fails for every $k$, and hence $\|\bv_k\|>\delta$ for all $k$. For each $k\in A$, Algorithm~\ref{AlgTPLSDG} returns a successful step, so the sufficient decrease condition gives
\[
f(\bx_{k+1})-f(\bx_k)
\le
\mu_1 t_k\,\hat{\bv}_k^\top\bd_k.
\]
Using the bounded angle condition and the normalization
$\|\bd_k\|=1$, we obtain
\[
\hat{\bv}_k^\top\bd_k
\le
-\varrho\|\hat{\bv}_k\|.
\]
Moreover, by the successful-step lower bound, $t_k\ge\underline t>0$. Therefore, for every $k\in A$,
\[
f(\bx_{k+1})-f(\bx_k)
\le
-\mu_1\varrho\,\underline t\,\|\hat{\bv}_k\|.
\]
Since $\hat{\bv}_k\in\mathcal V(\bx_k)\subset\conv(\mathcal V(\bx_k))$ and $\bv_k$ is the minimal-norm element of
$\conv(\mathcal V(\bx_k))$, we have $\|\bv_k\|\le\|\hat{\bv}_k\|$. Hence, because the algorithm has not terminated, $\|\hat{\bv}_k\|\ge\|\bv_k\|>\delta$. Consequently, for every $k\in A$, $f(\bx_{k+1})-f(\bx_k)
\le
-\mu_1\varrho\,\underline t\,\delta$. Thus, each successful iteration decreases the objective value by at least
the fixed positive amount $\mu_1\varrho\,\underline t\,\delta>0$. For enrichment iterations, the base point is unchanged, and therefore $f(\bx_{k+1})=f(\bx_k)$. If $A$ were infinite, then the objective values would satisfy $f(\bx_k)\to-\infty$. On the other hand, all iterates remain in the level set $\mathcal L(\bx_0)$. By (A3), this level set is bounded. Since $f$ is locally Lipschitz by
(A1), it is continuous; hence $\mathcal L(\bx_0)$ is closed and compact.
Therefore $f$ is bounded below on $\mathcal L(\bx_0)$, which contradicts
$f(\bx_k)\to-\infty$. Therefore $A$ must be finite.
\end{proof}

\begin{thm}[Finite termination of \texttt{DG-MatCSG}]
\label{thm:finite_termination_matcsg_final}
Assume that (A1) and (A3) hold, and suppose that the hypotheses of
Lemma~\ref{lem:A_finite_matcsg_strong} hold. Assume further that after finitely many successful iterations, whenever
the base point remains fixed and only enrichment iterations occur, the
search direction is reused. Denote this fixed direction by $\bar d$.
Suppose that the bounded angle condition is preserved along the enrichment
sequence, namely,
\[
\hat{\bv}_k^\top \bar d
\le
-\varrho\|\hat{\bv}_k\|\,\|\bar d\|,
\qquad
\forall k\in\mathbb  N_0,
\]
during consecutive enrichment iterations at the fixed base point.
Assume also that $\|\bar d\|=1$. Then Algorithm~\ref{alg:DG-MatCSG} terminates in a finite number of
iterations.
\end{thm}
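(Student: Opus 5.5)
The proof will argue by contradiction, as in the finite-termination argument of \cite{Maleknia2024}. Suppose Algorithm~\ref{alg:DG-MatCSG} never terminates. By Lemma~\ref{lem:A_finite_matcsg_strong}, the set $A$ of successful iterations is finite. Hence there is an index $\bar k$ such that $I_k=0$ for all $k\ge\bar k$.

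From $\bar k$ on, the following quantities are frozen:
\begin{itemize}
\item the base point $\bx_k\equiv\bar\bx$;
\item the search direction $\bd_k\equiv\bar d$ with $\|\bar d\|=1$;
\item the sets $\mathcal V(\bx_k)$, which form a nested increasing sequence, each obtained by adding the returned $\mathfrak{s}_k=\bg$.
\end{itemize}
The norms $\|\bv_k\|$ are therefore nonincreasing and stay $>\delta$, since the algorithm never stops. By Theorem~\ref{thm_dgtpls_final_clean}, each inner call terminates. Because it returns $I_k=0$, the new element $\hat\bv_{k+1}=\mathfrak{s}_k$ satisfies
\[
\hat\bv_{k+1}^\top\bar d\ge\mu_2\hat\bv_k^\top\bar d .
\]

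The key step is to turn the enrichment inequality into a contradiction with the preserved bounded angle condition. By hypothesis $\hat\bv_{k+1}^\top\bar d\le-\varrho\|\hat\bv_{k+1}\|$, and $\|\hat\bv_{k+1}\|\ge\|\bv_{k+1}\|>\delta$, so $\hat\bv_{k+1}^\top\bar d\le-\varrho\delta<0$ for all $k\ge\bar k$. Set $a_k:=-\hat\bv_k^\top\bar d>0$. Enrichment gives $a_{k+1}\le\mu_2 a_k$, hence
\[
a_k\le\mu_2^{\,k-\bar k}a_{\bar k}\to0 .
\]
The angle condition instead gives $a_k\ge\varrho\delta>0$ for all $k\ge\bar k$. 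For $k$ large enough that $\mu_2^{k-\bar k}a_{\bar k}<\varrho\delta$ these two bounds are incompatible, which is the desired contradiction. The bound $a_{\bar k}<\infty$ is automatic, and is in any case guaranteed by \eqref{eq:dg_bounded} through (A1) and (A3), since the iterates lie in the compact level set. The contradiction shows that the enrichment phase is finite. Together with the finiteness of $A$, this yields termination after finitely many iterations.

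The main point to check carefully is the indexing in the enrichment test. In Algorithm~\ref{AlgTPLSDG}, the base vector passed at outer iteration $k$ is $\hat\bv_k$, and the returned $\bg_i$ satisfies $\bg_i^\top\bar d\ge\mu_2\hat\bv_k^\top\bar d$. This is exactly what produces the geometric contraction of $a_k$. If instead the base vector were the minimal-norm $\bv_k$, I would run the same argument with $\bv_k^\top\bar d$ and use the inequality $\|\hat\bv_k\|\ge\|\bv_k\|>\delta$ to keep the lower bound. No convex-hull decrease estimate is then needed, because the hypothesized angle preservation along the enrichment sequence does all the work.
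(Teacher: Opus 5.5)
Your proof is correct and follows the paper's argument. Both use the finiteness of $A$ from Lemma~\ref{lem:A_finite_matcsg_strong}, the enrichment inequality $\hat\bv_{k+1}^\top\bar d\ge\mu_2\hat\bv_k^\top\bar d$ along the fixed direction $\bar d$, and the preserved bounded angle condition together with $\|\bv_k\|\le\|\hat\bv_k\|$ to contradict $\|\bv_k\|>\delta$. The differences are minor: you get $a_k\to0$ from the explicit bound $a_k\le\mu_2^{k-\bar k}a_{\bar k}$ rather than the paper's monotone-limit argument, which also bounds the number of enrichment steps. Your appeal to Theorem~\ref{thm_dgtpls_final_clean} for termination of the inner calls uses (A4), which the statement does not list, whereas the paper simply assumes each inner call returns.
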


\begin{proof}
Assume by contradiction that Algorithm~\ref{alg:DG-MatCSG} does not
terminate. Then the stopping criterion fails for every $k$, and hence $\|\bv_k\|>\delta$ for all $k$.

By Lemma~\ref{lem:A_finite_matcsg_strong}, the set of successful
iterations $A:=\{k\in\mathbb N_0:I_k=1\}$ is finite. Therefore, there exists $\bar k$ such that for all
$k\ge\bar k$, only enrichment iterations occur. Hence the base point is
fixed: $\bx_k=\bar\bx$ for all $k\ge\bar k$. During these consecutive enrichment iterations, the direction is reused;
denote it by $\bar d$.

Since $I_k=0$ for all $k\ge\bar k$, Algorithm~\ref{AlgTPLSDG} returns an
enrichment discrete gradient $\mathfrak{s}_k$ satisfying
\[
\mathfrak{s}_k^\top \bar d
\ge
\mu_2\hat{\bv}_k^\top\bar d.
\]
By Algorithm~\ref{alg:DG-MatCSG}, this returned vector becomes the next
base discrete gradient: $\hat{\bv}_{k+1}:=\mathfrak{s}_k$. Therefore,
\begin{equation}\label{eq:enrichment_recursive_angle}
\hat{\bv}_{k+1}^\top\bar d
\ge
\mu_2\hat{\bv}_k^\top\bar d.
\end{equation}
Defining $a_k:=\hat{\bv}_k^\top\bar d$, using the preserved bounded angle condition, and $\|\bar d\|=1$, we obtain
\[
a_k
=
\hat{\bv}_k^\top\bar d
\le
-\varrho\|\hat{\bv}_k\|
<0.
\]
Thus $a_k<0$ for all $k\ge\bar k$. From
\eqref{eq:enrichment_recursive_angle}, we get
\begin{equation}\label{eq:ak_recursive_bound}
a_{k+1}\ge \mu_2 a_k.
\end{equation}
Since $0<\mu_2<1$ and $a_k<0$, we have $\mu_2 a_k>a_k$; therefore, $a_{k+1}>a_k$. Hence $\{a_k\}_{k\ge\bar k}$ is strictly increasing. Since $a_k<0$, it is
bounded above by $0$. Therefore, there exists $\ell\le0$ such that $a_k\to\ell$. Passing to the limit in \eqref{eq:ak_recursive_bound} gives $\ell\ge\mu_2\ell$. Equivalently, $(1-\mu_2)\ell\ge0$. Because $1-\mu_2>0$ and $\ell\le0$, it follows that $\ell=0$. Thus $a_k\to0$. Using again the preserved bounded angle condition and $\|\bar d\|=1$, we
obtain $|a_k|
=
-\hat{\bv}_k^\top\bar d
\ge
\varrho\|\hat{\bv}_k\|$. Since $a_k\to0$, it follows that $\|\hat{\bv}_k\|\to0$. Moreover, $\hat{\bv}_k\in\mathcal V(\bx_k)\subset\conv(\mathcal V(\bx_k))$. Since $\bv_k$ is the minimal-norm element of
$\conv(\mathcal V(\bx_k))$, we have $\|\bv_k\|\le\|\hat{\bv}_k\|$, so that $\|\bv_k\|\to0$. This contradicts the condition $\|\bv_k\|>\delta$ for all $k$. Therefore, Algorithm~\ref{alg:DG-MatCSG} must terminate after finitely many iterations.
\end{proof}

\section{Computation of an Approximate Clarke Stationary Point}
\label{sec:clarke_stationarity}

The objective of this section is to compute a Clarke stationary point
by solving a sequence of approximate stationarity problems. To this end, we consider sequences $\{\delta_\nu\}_{\nu\in\Nz_0}$ and
$\{\varepsilon_\nu\}_{\nu\in\Nz_0}$ such that $\delta_\nu \downarrow 0$ and $
\varepsilon_\nu \downarrow 0$. At each outer iteration $\nu$, we compute a
$(\delta_\nu,\mathcal V(\bx_{\nu+1}))$ approximate Goldstein stationary point
in the sense of \eqref{eq:goldstein_stationarity_dg} by applying
Algorithm~\ref{alg:DG-MatCSG}. That is,
\[
\bx_{\nu+1} = \texttt{DG\text{-}MatCSG}(\bx_\nu,\varepsilon_\nu,\delta_\nu)
\]
satisfies $\min_{\bv \in \conv(\mathcal V(\bx_{\nu+1}))} \|\bv\|
\le \delta_\nu$. By Proposition~\ref{prop:DG_to_Goldstein}, this implies that
$\bx_{\nu+1}$ is a
$(\delta_\nu + \bar\delta_\nu + \eta_{\nu},
\mathcal G_{\varepsilon_\nu}(\bx_{\nu+1}))$
approximate Goldstein stationary point, namely,
\[
\min_{\bu \in \conv(\mathcal G_{\varepsilon_\nu}(\bx_{\nu+1}))} \|\bu\|
\le
\delta_\nu + \bar\delta_\nu + \eta_{\nu},
\]
where $\bar\delta_\nu$ and $\eta_{\nu}$ denote the approximation errors
appearing in \eqref{eq:outer_dg} and \eqref{eq:approx_g}.

The quantities $\bar\delta_\nu$ and $\eta_\nu$ are not explicitly computed
within the algorithm. They arise from the approximation properties of
the discrete-gradient construction and are assumed to vanish asymptotically
as the sampling becomes sufficiently dense. The overall procedure is given in Algorithm \ref{alg:clarke} below. We refer to Algorithm~\ref{alg:clarke} as \texttt{DG\text{-}Clarke},
short for the Discrete-Gradient Clarke-Stationarity algorithm.

\begin{algorithm}[H]
\caption{\texttt{DG\text{-}Clarke}: Discrete-Gradient Clarke-Stationarity Algorithm}
\label{alg:clarke}

\textbf{Input:}
Starting point $\bx_0 \in \mathbb{R}^n$, sequences $\{\delta_\nu\}\downarrow 0$,
$\{\varepsilon_\nu\}\downarrow 0$, and tolerance $\eta > 0$.

\textbf{Output:}
An approximation of a Clarke stationary point.

\begin{algorithmic}[1]

\State Initialize $\nu := 0$

\While{true}

\State $\bx_{\nu+1} := \texttt{DG\text{-}MatCSG}(\bx_\nu,\varepsilon_\nu,\delta_\nu)$

\State {\bf if} $\delta_\nu \le \eta$ \textbf{and} $\varepsilon_\nu \le \eta$ {\bf then}, \Return $\bx_{\nu+1}$; {\bf end if}

\State $\nu \leftarrow \nu + 1$

\EndWhile

\end{algorithmic}
\end{algorithm}

For the convergence analysis below, we consider the idealized infinite
version of Algorithm~\ref{alg:clarke}, obtained by omitting the finite
stopping test involving $\eta$. The stopping test is used only for
practical computation.

Assumption (A2) enters the following convergence result through the
finite-sample consistency inclusion for the discrete-gradient model.
Therefore, the theorem is stated directly in terms of the inclusion
\begin{equation}\label{eq:dg_consistency_outer}
\conv(\mathcal V(\bx_{\nu+1}))
\subset
\partial_{\varepsilon_\nu} f(\bx_{\nu+1})+B(0,\bar\delta_\nu),
\qquad
\bar\delta_\nu\to0.
\end{equation}
Once \eqref{eq:dg_consistency_outer} holds, the remaining argument is
purely variational and relies only on the local boundedness, convexity,
and outer semicontinuity of the Clarke subdifferential. We now establish the asymptotic convergence of this procedure.

\begin{thm}[Convergence to a Clarke stationary point]
\label{thm:clarke_convergence}
Assume that (A1)--(A3) hold and that the discrete-gradient consistency
inclusion \eqref{eq:dg_consistency_outer} holds with $\bar\delta_\nu\to0$.
Suppose that the idealized infinite version of Algorithm~\ref{alg:clarke}
generates points $\bx_{\nu+1}$ satisfying
\[
\min_{\bv\in\conv(\mathcal V(\bx_{\nu+1}))}\|\bv\|
\le
\delta_\nu,
\]
where $\delta_\nu\to0$ and $\varepsilon_\nu\to0$. Assume moreover that the generated sequence remains in the initial level
set, that is, $\bx_\nu\in\mathcal L(\bx_0)$ for all $\nu\ge0$. Then any accumulation point $\bx^\ast$ of the shifted sequence
$\{\bx_{\nu+1}\}_{\nu\in\Nz_0}$ generated by
Algorithm~\ref{alg:clarke} is a Clarke stationary point of \(f\), i.e.,
\(0\in\partial f(\bx^\ast)\). Consequently, the same conclusion holds for
the accumulation points of \(\{\bx_\nu\}_{\nu\in\Nz_0}\).
\end{thm}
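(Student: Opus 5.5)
The proof combines the finite-sample inclusion \eqref{eq:dg_consistency_outer} with a Carathéodory-type limiting argument for the Goldstein subdifferential, and then uses outer semicontinuity of the Clarke subdifferential (Remark~\ref{rem:clarke_properties}).

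\textbf{Step 1: existence of accumulation points and a residual bound.} Since every $\bx_\nu$ lies in the level set $\mathcal L(\bx_0)$, which is compact by (A1) and (A3), accumulation points exist. Fix one, $\bx^\ast$, together with a subsequence $K$ such that $\bx_{\nu+1}\to\bx^\ast$ for $\nu\in K$. Let $\bv_{\nu+1}$ be the minimal-norm element of $\conv(\mathcal V(\bx_{\nu+1}))$, so that $\|\bv_{\nu+1}\|\le\delta_\nu$. By \eqref{eq:dg_consistency_outer}, as in \eqref{eq:dg_stationarity_residual}, there is $\bw_\nu\in\partial_{\varepsilon_\nu}f(\bx_{\nu+1})$ with
\[
\|\bw_\nu\|\le\delta_\nu+\bar\delta_\nu\to0 .
\]

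\textbf{Step 2: writing $\bw_\nu$ via nearby Clarke subgradients.} The set $\partial_{\varepsilon_\nu}f(\bx_{\nu+1})$ is the closure of a convex hull, so I first pass to a point $\bw_\nu'$ in the (unclosed) convex hull with $\|\bw_\nu-\bw_\nu'\|\le 1/\nu$. By Carathéodory's theorem in $\R^n$,
\[
\bw_\nu'=\sum_{j=1}^{n+1}\lambda_{\nu,j}\bg_{\nu,j},\qquad \bg_{\nu,j}\in\partial f(\by_{\nu,j}),\qquad \|\by_{\nu,j}-\bx_{\nu+1}\|\le\varepsilon_\nu,
\]
with $\lambda_{\nu,j}\ge0$ and $\sum_j\lambda_{\nu,j}=1$. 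The points $\by_{\nu,j}$ lie in a fixed bounded set, namely $\mathcal L(\bx_0)+\ol B(0,1)$ once $\varepsilon_\nu\le1$. The local boundedness in Remark~\ref{rem:clarke_properties} therefore gives $\|\bg_{\nu,j}\|\le M$ uniformly.

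\textbf{Step 3: passing to the limit.} I extract a further subsequence along which $\lambda_{\nu,j}\to\lambda_j$ and $\bg_{\nu,j}\to\bg_j$ for each $j$. Since $\varepsilon_\nu\to0$, we have $\by_{\nu,j}\to\bx^\ast$, and outer semicontinuity gives $\bg_j\in\partial f(\bx^\ast)$. Because $\bw_\nu'\to0$, it follows that $0=\sum_j\lambda_j\bg_j$. The $\lambda_j$ form a convex combination and $\partial f(\bx^\ast)$ is convex, so $0\in\partial f(\bx^\ast)$.

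\textbf{Step 4: the unshifted sequence.} Every accumulation point of $\{\bx_\nu\}_{\nu\in\Nz_0}$ is an accumulation point of $\{\bx_{\nu+1}\}_{\nu\in\Nz_0}$, since the two sequences differ only in their first term. Hence the conclusion transfers directly.

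The main obstacle is the bookkeeping in Steps 2 and 3: handling the closure in the definition \eqref{e.Goldsteinsubb}, and using Carathéodory's theorem to fix the number of terms so that compactness of the coefficients and subgradients can be applied. I will handle the closure with the $1/\nu$ approximation described above. Assumption (A2) is used only through \eqref{eq:dg_consistency_outer}, as the statement already indicates.
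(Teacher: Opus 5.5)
Your proposal is correct and follows the paper's proof essentially step for step. The shared steps are: the residual bound $\|\bw\|\le\delta_\nu+\bar\delta_\nu$ from the consistency inclusion, an approximation step to remove the closure in the Goldstein subdifferential, Carath\'eodory's theorem with $n+1$ terms, extraction of convergent subsequences of coefficients and subgradients, outer semicontinuity, and convexity of $\partial f(\bx^\ast)$. The differences are cosmetic: you obtain the uniform subgradient bound on $\mathcal L(\bx_0)+\ol B(0,1)$ rather than near $\bx^\ast$, and you explicitly justify the transfer to the unshifted sequence $\{\bx_\nu\}$, which the paper leaves implicit.
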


\begin{proof}
Since $\bx_\nu\in\mathcal L(\bx_0)$ for all $\nu\ge0$ and
$\mathcal L(\bx_0)$ is bounded by (A3), the sequence $\{\bx_\nu\}_{\nu\in\Nz_0}$ admits
accumulation points. Let $\bx^\ast$ be an accumulation point of the shifted
sequence $\{\bx_{\nu+1}\}_{\nu\in\Nz_0}$, and let $\bx_{\nu_j+1}\to\bx^\ast$. By construction, for each $\nu_j$ there exists
\[
\bv_{\nu_j+1}\in\conv(\mathcal V(\bx_{\nu_j+1}))
\]
such that $\|\bv_{\nu_j+1}\|\le\delta_{\nu_j}$. Using the consistency inclusion \eqref{eq:dg_consistency_outer}, there
exists
\[
\bw_{\nu_j+1}\in
\partial_{\varepsilon_{\nu_j}}f(\bx_{\nu_j+1})
\]
such that $\|\bv_{\nu_j+1}-\bw_{\nu_j+1}\|\le\bar\delta_{\nu_j}$. Hence
\[
\|\bw_{\nu_j+1}\|
\le
\|\bv_{\nu_j+1}\|
+
\|\bv_{\nu_j+1}-\bw_{\nu_j+1}\|
\le
\delta_{\nu_j}+\bar\delta_{\nu_j}
\to0.
\]
Since $\bw_{\nu_j+1}\in\partial_{\varepsilon_{\nu_j}}f(\bx_{\nu_j+1})$, by the definition of the Goldstein subdifferential,
\[
\bw_{\nu_j+1}
\in
\cl\conv
\left(
\bigcup_{\|\by-\bx_{\nu_j+1}\|\le\varepsilon_{\nu_j}}
\partial f(\by)
\right).
\]
Using the closure in the definition of
$\partial_{\varepsilon_{\nu_j}}f(\bx_{\nu_j+1})$, for each $j$ we may
choose vectors
\[
\wt{\bw}_{\nu_j+1}\in
\conv
\left(
\bigcup_{\|\by-\bx_{\nu_j+1}\|\le\varepsilon_{\nu_j}}
\partial f(\by)
\right)
\]
such that $\|\wt{\bw}_{\nu_j+1}-\bw_{\nu_j+1}\|\le 1/j$. Then
\[
\|\wt{\bw}_{\nu_j+1}\|
\le
\|\bw_{\nu_j+1}\|
+
\|\wt{\bw}_{\nu_j+1}-\bw_{\nu_j+1}\|
\to0.
\]
By Carathéodory's theorem, there exist coefficients
\[
\lambda_i^{(j)}\ge0,
\qquad
\sum_{i=0}^n\lambda_i^{(j)}=1,
\]
points $\by_i^{(j)}$ with $\|\by_i^{(j)}-\bx_{\nu_j+1}\|\le\varepsilon_{\nu_j}$, and subgradients $\bg_i^{(j)}\in\partial f(\by_i^{(j)})$ such that
\[
\wt{\bw}_{\nu_j+1}
=
\sum_{i=0}^n
\lambda_i^{(j)}\bg_i^{(j)}.
\]
Since $\bx_{\nu_j+1}\to\bx^\ast$ as
$\varepsilon_{\nu_j}\to0$, we have $\by_i^{(j)}\to\bx^\ast$ for each $i=0,\dots,n$. By local boundedness of the Clarke subdifferential near $\bx^\ast$, the
sequences $\{\bg_i^{(j)}\}_j$ are bounded. Passing to a subsequence if
necessary, we may assume
\[
\bg_i^{(j)}\to\bg_i^\ast,
\quad \text{and} \quad\lambda_i^{(j)}\to\lambda_i^\ast,
\]
with $\lambda_i^\ast\ge0$ and
$\sum_{i=0}^n\lambda_i^\ast=1$. By outer semicontinuity of the Clarke subdifferential,
\[
\bg_i^\ast\in\partial f(\bx^\ast),
\qquad
i=0,\dots,n.
\]
Passing to the limit in $\wt{\bw}_{\nu_j+1}
=
\sum_{i=0}^n
\lambda_i^{(j)}\bg_i^{(j)}$, and using $\|\wt{\bw}_{\nu_j+1}\|\to0$, we obtain $0
=
\sum_{i=0}^n
\lambda_i^\ast\bg_i^\ast$. Since $\partial f(\bx^\ast)$ is convex, the convex combination on the
right-hand side belongs to $\partial f(\bx^\ast)$. Hence, $0\in\partial f(\bx^\ast)$; this proves the claim.
\end{proof}

\section{Numerical Results}\label{sec:numerical_results}

In this section, we evaluate the performance of three variants of our algorithm \texttt{DG\text{-}Clarke} on a standard set of benchmark problems. These variants use, respectively, the steepest-descent direction, the approximate conjugate subgradient direction, and the approximate matrix-conjugate subgradient direction, all of which are normalized.  We compare them with three existing solvers: {\tt DDG-Bundle} \cite{Karmitsa2016DDGBundle}, {\tt LDGB} \cite{KarmitsaBagirov2012LDGBM,KarmitsaBagirov2011LDGBM}, and {\tt DGM} \cite{BagirovKarasozenSezer2008DGM}. To perform the experiments in MATLAB, we developed MEX interfaces for the available Fortran implementations of these solvers. The corresponding Fortran codes are available from Napsu~Karmitsa's software page:
\url{https://www.napsu.karmitsa.fi/}, under the directories
{\tt dgm}, {\tt ldgbm}, and {\tt ddgbundle}.

The supplementary material {\tt suppMat.pdf} \cite{suppMat} provides the details of the
numerical test environment and the implementation settings. It describes the
finite-max nonsmooth test environment {\tt TEminmax}, which is stored in the
accompanying MATLAB file {\tt TE.mat}. This file contains the problem
structures, including the problem name, dimension, number of finite-max terms,
loss type, data set, starting points, bound information, and objective-function
handle. In the current version, {\tt TEminmax} contains \(80\) problems:
\(50\) real-data problems generated from five data sets and ten finite-max loss
functions, together with \(30\) additional large-scale synthetic finite-max
problems. The dimensions range from \(n=8\) to \(n=1000\), while the number of
finite-max terms ranges from \(m=200\) to \(m=4177\). 

For each problem, the corresponding hitlist file stores reference information
used only for benchmarking. This includes the best available reference point,
the reference objective value, bound information, and related accuracy
diagnostics. The value \(f_{\opt}\) used in the stopping test (see \eqref{e.qs}, below) is taken from the
hitlist as the best available reference value. Therefore, \(f_{\opt}\) is not
assumed to be known in a practical run of the algorithm; it is used only to
define the benchmarking criterion and to ensure that all solvers are stopped by
the same external rule.

The supplementary material also reports the fixed tuning parameters and
computational safeguards used in the implementation, including the
discrete-gradient, line-search, bundle, scaling, and matrix-stability settings.
These details are placed in the supplementary material to keep the main
numerical section focused on the performance results. In particular, the same
parameter policy is used across the proposed variants, so the comparisons are
not driven by problem-dependent tuning.

The files {\tt suppMat.pdf}, {\tt TEminmax}, {\tt TE.mat}, and the hitlists for
all \(80\) finite-max problems are available at the GitHub repository
{\tt DG-Clarke-TEminmax}:

\url{https://github.com/GS1400/DG-Clarke-TEminmax}.

The {\tt DG-Clarke} solver is also publicly available at

\url{https://github.com/GS1400/DG_Clarke}.

\subsection{Stopping criteria}

To measure the progress of each solver \(s\in\mathcal S\), we use the relative objective reduction
\begin{equation}\label{e.qs}
q_s :=
\frac{f_s-f_{\opt}}{f_0-f_{\opt}},
\end{equation}
where \(\mathcal S\) is the set of solvers being compared. Here, \(f_s\) denotes the best objective value produced by solver \(s\), \(f_0\) is the objective value at the common initial point, and \(f_{\opt}\) is the best known objective value for the problem. The value \(f_{\opt}\) is typically obtained from the best point found by a collection of local and global derivative-free methods and proposed solvers in the present paper, and may correspond to a global minimizer or to a high-quality local minimizer. Thus, \(q_s\) is used only as a benchmarking metric; in practical applications, \(f_{\opt}\) is usually not available.

Solver \(s\) is said to solve a problem if
\[
    q_s \leq \varepsilon
\]
before either of the prescribed computational limits is reached, namely the maximum number of function evaluations \(\texttt{nfmax}\) or the maximum CPU time \(\texttt{secmax}\). If this condition is not satisfied within these limits, the problem is classified as unsolved for that solver.

The parameters \(\varepsilon\), \(\texttt{nfmax}\), and \(\texttt{secmax}\) are selected to make the comparison informative: they are chosen so that the strongest solver solves at least half of the test set, except when the noise level is sufficiently large that increasing the evaluation or time budget no longer yields a meaningful improvement in robustness or efficiency. In the numerical experiments, we used
\[
    \texttt{secmax}=600~\text{sec},\quad
    \texttt{nfmax}=2\times 10^6,\quad
    \varepsilon\in\{10^{-6},10^{-4},10^{-2}\},\quad \text{and} \quad
   2\leq n\leq 10^3.
\]

\subsection{Diagonal scaling from previously accepted best points}\label{sec:diag}

We use a bounded diagonal scaling vector computed from recently obtained
best points. Let $\bx_k^{(1)},\bx_k^{(2)},\ldots,\bx_k^{(r_k)}$ with $r_k\le m_s$ denote the stored best points, ordered according to their objective values,
with \(\bx_k^{(1)}\) being the best one. Here \(m_s\) is the prescribed
memory size used for the scaling archive; $m_s=3$ is used for the current comparison. If fewer than two best points are
available, we set \(\bs_k=\mathbf 1\). Otherwise, for each coordinate
\(j=1,\ldots,n\), we compute
\[
\bar s_{k,j}
=
\D\median_{\ell=2,\ldots,r_k}
\left|
x_{k,j}^{(\ell)}-x_{k,j}^{(1)}
\right|.
\]
Nonfinite and zero entries are replaced by one. The scaling vector is then
clipped componentwise as
\[
s_{k,j}
=
\min\left\{
s_{\max},
\max\left\{
s_{\min},\bar s_{k,j}
\right\}
\right\},
\qquad j=1,\ldots,n,
\]
where $0<s_{\min}\le s_{\max}<\infty$. Thus, with \(S_k=\operatorname{diag}(\bs_k)\), the matrix \(S_k\) is uniformly
bounded and uniformly nonsingular.

In the implementation, after computing the \texttt{MatCSG} direction
\(\bd_k^{\MCS}\) by \eqref{dkCSG}, we apply the diagonal scaling before the
bounded-angle correction. More precisely, \(S_k\bd_k^{\MCS}\) is used as the
input direction in the correction step \eqref{e.dcorr}--\eqref{e.lambda}.
The resulting corrected direction is then normalized according to
\eqref{eq:normalized_MatCSG_direction} before being passed to the two-point
line-search procedure.

This order is essential. Unlike scalar positive scaling, diagonal scaling can
change the angle between the search direction and \(\bv_k\). Therefore, the
scaling is applied before the bounded-angle correction, while the final
direction used in the line search still satisfies the bounded-angle condition
required in the analysis. Consequently, the line-search and finite-termination
arguments remain unchanged.

\subsection{Results for $n=$2--1000}
\label{resall}

Figure~\ref{f.f2} reports performance profiles with respect to the number of
function evaluations. Recall that $\rho(\tau)$ gives the fraction of problems
solved within a factor $\tau$ of the best solver on each problem. Thus, values
near $\tau=1$ reflect efficiency on the easiest instances, whereas the plateau
for large $\tau$ reflects robustness, i.e., the fraction of problems solved
within the prescribed budgets.

For the low-accuracy tolerance $\varepsilon=10^{-2}$, shown in the left panel,
both proposed variants attain the highest final values of $\rho(\tau)$ and
therefore solve the largest fraction of problems. The benchmark solvers are
competitive only over part of the range of $\tau$, but their profiles plateau at
lower levels.

For the medium-accuracy tolerance $\varepsilon=10^{-4}$, shown in the middle
panel, the same trend remains visible. The proposed variants again achieve the
largest solved fractions, while {\tt DGM}, {\tt DDGBUNDLE}, and {\tt LDGBM}
plateau earlier and at lower values. This indicates that the proposed methods
are more robust over the full test set.

For the high-accuracy tolerance $\varepsilon=10^{-6}$, shown in the right panel,
the advantage of the matrix variant becomes more pronounced.
{\tt DG-Clarke-MatCSGdir} solves a substantially larger fraction of problems
than all other solvers. In contrast, {\tt DG-Clarke-SDdir} and {\tt DGM}
plateau at much lower values, while {\tt DDGBUNDLE} and {\tt LDGBM} solve very
few problems under this stricter requirement.

Overall, the profiles show a clear robustness advantage for the proposed
methods, especially for tighter tolerances. The steepest-descent variant is
competitive for lower and medium accuracy, but its robustness deteriorates at
$\varepsilon=10^{-6}$. The matrix conjugate-subgradient variant
{\tt DG-Clarke-MatCSGdir} remains the most reliable solver across the three
accuracy levels.

\begin{figure}[H]
\centering
\scalebox{0.5}{%
\includegraphics[width=10cm,height=10cm]{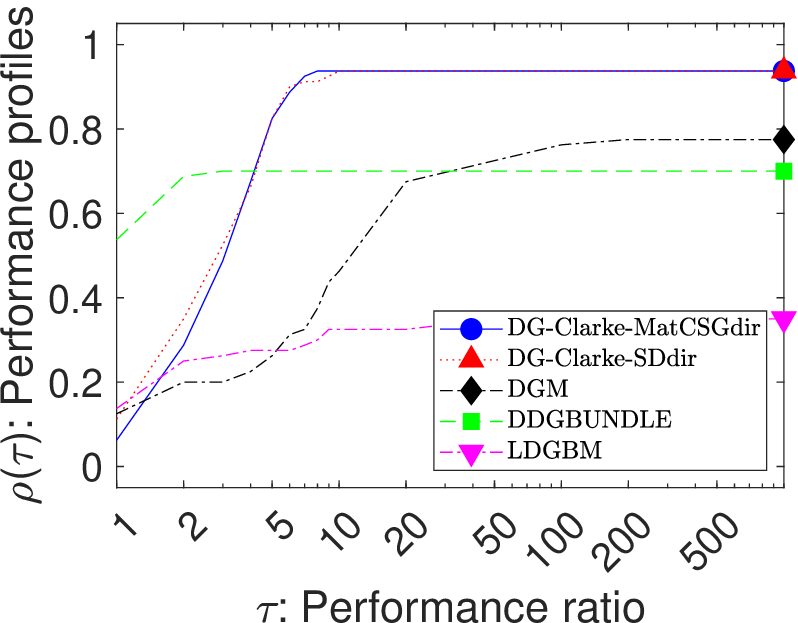}
\hspace{0.5cm}
\includegraphics[width=10cm,height=10cm]{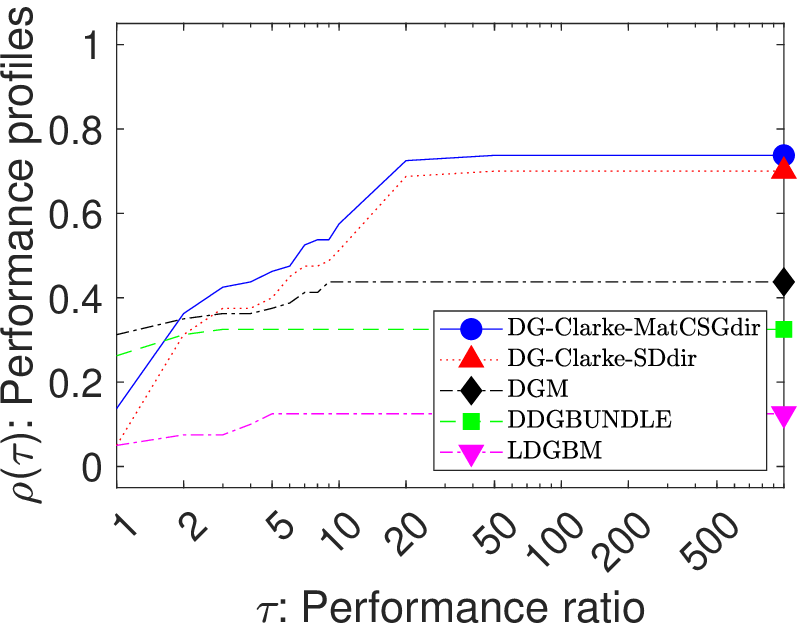}
\hspace{0.5cm}
\includegraphics[width=10cm,height=10cm]{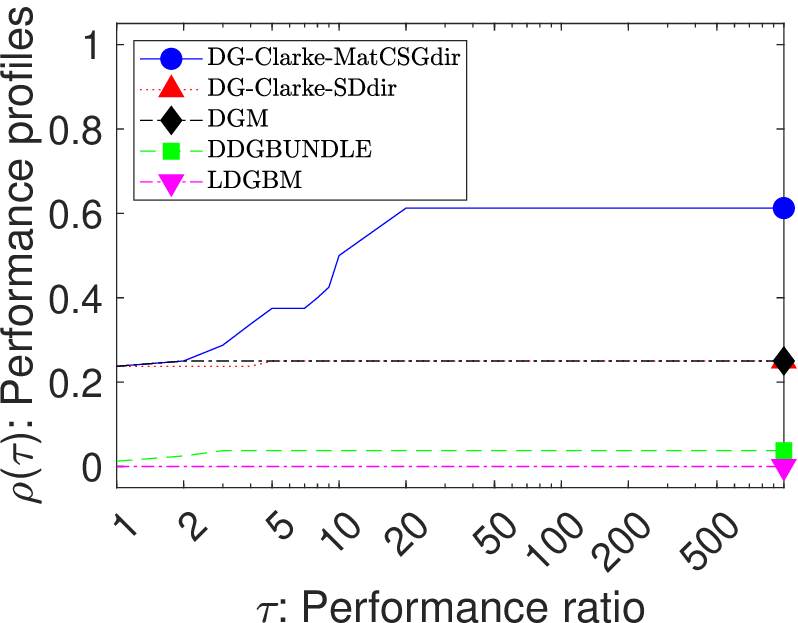}}
\caption{Performance profiles of two variants of our solver and the three
state-of-the-art solvers based on the number of function evaluations
(\texttt{nf}) with ${\tt nfmax}=2\times 10^6$ and ${\tt sexmax}=600$ sec:
$\varepsilon=10^{-2}$ (left, low accuracy), $\varepsilon=10^{-4}$ (middle,
medium accuracy), and $\varepsilon=10^{-6}$ (right, high accuracy).}
\label{f.f2}
\end{figure}

\section{Conclusions}\label{sec:conclusion}

In this paper, we proposed a derivative-free matrix conjugate-subgradient framework for unconstrained nonsmooth optimization of locally Lipschitz functions. The method uses discrete gradients to construct a finite sampled model of the Goldstein subdifferential and computes a minimal-norm element of the convex hull of these sampled vectors. This vector provides both a practical stationarity measure and a reference direction for constructing descent-oriented search directions. The main algorithmic novelty is the introduction of a derivative-free matrix conjugate-subgradient direction. 

This direction extends the basic discrete-gradient steepest-descent direction by incorporating memory from previous iterations through a structured matrix correction. Since the available first-order information is only approximate, several safeguards were introduced, including coefficient damping, dominance control, diagonal scaling, matrix-stability checks, and bounded-angle correction. These mechanisms make the memory-enhanced direction more stable and robust in a fully derivative-free nonsmooth setting.

We also adapted exact-subgradient sampling and enrichment ideas to the approximate-subgradient setting by replacing exact subgradients with discrete-gradient approximations computed only from function values. The two-point line-search procedure was designed to either accept a serious step or enrich the local sampled model, thereby improving robustness without requiring analytical derivatives or active-index information. Under the stated consistency and directional assumptions, the finite sampled model is consistent with Goldstein-type stationarity, and the safeguarded search directions satisfy the required descent-oriented properties. 

The numerical experiments demonstrate that the proposed variants are more robust
than the benchmark DFON solvers on the tested problem
collections. For the lower and medium accuracy requirements, both proposed
variants solve the largest fraction of problems within the prescribed
function-evaluation and time budgets. Under the strictest tolerance,
$\varepsilon=10^{-6}$, the advantage of the matrix conjugate-subgradient variant
becomes more pronounced: {\tt DG-Clarke-MatCSGdir} remains the most reliable
solver, whereas the steepest-descent variant and the benchmark solvers plateau
at substantially lower solved fractions. Overall, the results show that the
matrix memory mechanism improves robustness as the accuracy requirement becomes
more demanding.

Future work will focus on reducing the cost of discrete-gradient sampling for large-scale problems, developing adaptive rules for locality and sampling parameters, and extending the proposed framework to constrained nonsmooth optimization problems.

\section*{Declarations}

\noindent
\textbf{Data Availability:} The data that support the findings of this research can be obtained from the corresponding author, subject to a reasonable request.\\

\noindent
\textbf{Conflict of Interest:} No competing interests are declared by the authors.\\

\noindent
\textbf{Ethical Statement:} It is declared that this research did not involve any studies with human participants or animals, and that there are no ethical issues associated with this work.

{\bf Funding} Morteza Kimiaei acknowledges financial support of the Austrian Science Foundation under \url{https://doi.org/10.55776/PAT2747625}.


\bibliographystyle{plain}
\bibliography{References}


\end{document}